\renewcommand{\P}{\set{P}}
\newcommand{\+}{\frac{1}{2}+\eps}
\title{A Simple Geometric Proof of the Optimality of the Sequential Probability Ratio Test for Symmetric Bernoulli Hypotheses}
\author{%
  \begin{tabular}[t]{@{}c@{\hspace{5em}}c@{\hspace{5em}}c@{}} %
    Chirag Pabbaraju\thanks{\texttt{cpabbara@cs.stanford.edu}} &
    Gregory Valiant\thanks{\texttt{gregory.valiant@gmail.com}} &
    Rishi Verma\thanks{\texttt{verma.rishiraj@gmail.com}}
  \end{tabular}\\[1ex]
  \textit{Computer Science Department, Stanford University}
}
\date{\today}
\begin{document}

\maketitle

\begin{abstract}
    This paper revisits the classical problem of determining the bias of a weighted coin, where the bias is known to be either $p = 1/2 + \varepsilon$ or $p = 1/2 - \varepsilon$, while minimizing the expected number of coin tosses and the error probability. The optimal strategy for this problem is given by Wald's Sequential Probability Ratio Test (SPRT), which compares the log-likelihood ratio against fixed thresholds to determine a stopping time. Classical proofs of this result typically rely on analytical, continuous, and non-constructive arguments. In this paper, we present a discrete, self-contained proof of the optimality of the SPRT for this problem. We model the problem as a biased random walk on the two-dimensional (heads, tails) integer lattice, and model strategies as marked stopping times on this lattice. Our proof takes a straightforward greedy approach, showing how any arbitrary strategy may be transformed into the optimal, parallel-line ``difference policy'' corresponding to the SPRT, via a sequence of local perturbations that improve a Bayes risk objective.
\end{abstract}

\newpage %

\section{Introduction}
\label{sec:intro}

Suppose we are given a biased coin which lands heads with unknown probability $p$, where $p$ is either $1/2 + \eps$ or $1/2 - \eps$ for some known $\eps > 0$. Our goal is to determine the bias of the coin using the minimum number of coin flips in expectation, and with small failure probability. After each coin flip, we can either output our prediction of the bias $p$ of the coin, or defer the decision and collect the result of the next coin flip. 

This is a special case of the classical problem of \textit{sequential hypothesis testing}, whose roots were laid in the pioneering work of Wald \cite{wald1945sequential}. In the problem of sequential testing with a pair of simple hypotheses, we must decide between two hypotheses $H_0$ and $H_1$ over a sequence of observations collected one at a time. The stopping criterion is determined by a pre-defined \textit{policy} for every sequence of observations. %

In this setting, Wald's Sequential Probability Ratio Test (SPRT) \cite{wald1945sequential} has been shown to be optimal \cite{wald1948optimum}, minimizing the expected number of observations collected for any given failure probability. The SPRT is defined by specifying two parameters $A$ and $B$, where $A < 0 < B$. At any time $t$, after having seen observations $x_1,\dots,x_t$, the SPRT computes the log-likelihood ratio of these observations under $H_1$ to that under $H_0$; if this log-likelihood ratio is smaller than $A$, the test stops and accepts $H_0$, and if it is larger than $B$, it accepts $H_1$; otherwise, the test goes on to collect observation $x_{t+1}$.

The aforementioned problem of deciding whether the bias $p$ of a coin is either $1/2+\eps$ or $1/2-\eps$ corresponds to the sequential testing problem with \textit{symmetric Bernoulli hypotheses} $H_0: p=1/2-\eps$ and $H_1: p=1/2+\eps$. In this case, the SPRT, with parameters $A,B$ satisfying $-A=B$, simplifies into an elegant parallel-line \emph{difference} or \emph{linear} policy $\P_c$. In particular, we continue flipping the coin until the absolute difference between the number of heads and tails reaches a particular integer threshold $c=c(A,\eps)$. At this point, we stop and declare the bias to be in the direction of the more frequent outcome: $1/2+\eps$ if there are more heads, and $1/2-\eps$ otherwise.

The optimality of the SPRT is a fundamental result in sequential testing, as it applies generically to every simple hypothesis test. Its proof has been approached from a number of different angles. However, many of these rely on non-constructive continuity-based arguments and complex analysis. %
In the symmetric Bernoulli case to which we restrict our attention, this complex machinery seemingly obscures the intuition for the optimality of the simple difference policy.

In this paper, we present a new, self-contained proof of the optimality of the difference policy for the specific setting of symmetric Bernoulli hypotheses. Our alternate approach is discrete and constructive. It does not require using higher-power tools, and beyond a basic result about stopping times for martingales, relies entirely on elementary arguments and geometric intuition. Specifically, we model the sequence of coin flips as a biased random walk on the two-dimensional integer lattice $\mathbb{Z}_{\geq 0}^2$, corresponding to the (heads, tails) plane. Because the count of heads observed is a sufficient statistic for the bias $p$ of the coin, any policy can be seen as a coloring of this lattice with three colors, corresponding to the decisions ``predict $H_0$'', ``predict $H_1$'', and ``continue tossing'' (see \Cref{fig:policy}). The points where we predict either $H_0$ or $H_1$ correspond to \textit{stopping points} of the policy. Thus, we can frame the sequential testing problem in terms of defining optimal decision boundaries or stopping points on this lattice which minimize the \textit{hitting time} of the random walk. %

Our proof then defines an objective function for any policy, which resembles the \textit{Bayes risk} objective \cite{wald1945sequential} typically introduced in other proofs of optimality for the SPRT. This objective function is a linear combination of the failure probability and the expected number of tosses of the policy. The high-level proof strategy is as follows: suppose we want to establish optimality for a difference policy with threshold $c$. Our approach shows how we can carefully modify any arbitrary policy to this difference policy, by making \textit{local, greedy adjustments} to the lattice representation of the policy. For linear combination parameters chosen as a careful function of $c$, we ensure that each adjustment may only improve the objective function. At the end of the procedure, we will have transformed the policy into the target difference policy, establishing its optimality. We present a more detailed overview of this novel greedy strategy in \Cref{sec:overview}.

\subsection{Main Results}
\label{sec:main-results}

With any policy $\mcP$, we associate its \textit{profile} $(\delta, H)$, which comprises of four quantities:
\begin{alignat}{2}
    \label{eqn:profile}
    &\delta^+ = \Pr_{1/2+\eps}\left[\mcP \text{ predicts } 1/2-\eps\right], \qquad
    &&\delta^- = \Pr_{1/2-\eps}\left[\mcP \text{ predicts } 1/2+\eps\right] \nonumber\\
    &H^+ = \E_{1/2+\eps}\left[\# \text{ tosses by } \mcP\right], 
    &&H^- = \E_{1/2-\eps}\left[\# \text{ tosses by } \mcP\right].
\end{alignat}
Here, the notation $\Pr_{1/2+\eps}[\cdot]$ and $\E_{1/2+\eps}[\cdot]$ denotes the probability and expectation with respect to the randomness of the observed coin flips under $H_1:p=1/2+\eps$ and the policy $\mcP$ (and similarly for the notation $\Pr_{1/2-\eps}[\cdot], \E_{1/2-\eps}[\cdot]$). Recall that a linear policy $\mcP_c$ with threshold $c$ stops when the absolute difference between the observed number of heads and tails equals $c$, and predicts $1/2+\eps$ if the number of heads is larger at this point, and $1/2-\eps$ otherwise.

Then, the first main result we show is the following:

\begin{restatable}[Optimality of the Linear Policy]{theorem}{theoremlinearpolicyoptimality}
\label{thm:linear-policy}
   Consider the sequential hypothesis testing problem of deciding between $H_0:p=1/2-\eps$ and $H_1:p=1/2+\eps$. For any $c \ge 0$, consider the linear policy $\mcP_c$ having profile $(\delta_c, H_c)$. 
   Let $\mcP$ be any other policy with profile $(\delta, H)$.
   Suppose that $\delta^+ + \delta^- < \delta^+_c + \delta^-_c$. Then, it must be the case that $H^+ + H^- > H^+_c + H^-_c$. %
\end{restatable}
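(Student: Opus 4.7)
The plan is to introduce a Bayes-risk style objective $R_\lambda(\mcP) := \lambda(\delta^+ + \delta^-) + (H^+ + H^-)$ for a carefully chosen $\lambda = \lambda_c > 0$, and to show that $\mcP_c$ minimizes $R_{\lambda_c}$ over all policies. The theorem then follows by contraposition: any $\mcP$ satisfying both $\delta^+ + \delta^- < \delta_c^+ + \delta_c^-$ and $H^+ + H^- \le H_c^+ + H_c^-$ would give $R_{\lambda_c}(\mcP) < R_{\lambda_c}(\mcP_c)$, contradicting minimality.

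First I would make the lattice picture precise. Every policy is a three-coloring of $\mathbb{Z}_{\ge 0}^2$ whose ``continue'' region is downward-closed from the origin, and because the head-count is a sufficient statistic, the contribution of each stopping point $(h,t)$ to $\delta^{\pm}$ and $H^{\pm}$ is an explicit function of the biased-walk hitting probabilities of $(h,t)$ under $H_0$ and $H_1$. I would then calibrate $\lambda_c$ so that, at every point on the band boundary $|h-t| = c$, the local contribution to $R_{\lambda_c}$ of stopping equals that of taking one further step and then following $\mcP_c$ thereafter. The $h \leftrightarrow t$ symmetry of the problem makes the same $\lambda_c$ work on both boundary lines of the band. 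This calibration is also the one place in the argument where the authors' promised martingale input enters: it ensures that the biased walk almost surely exits the band in finite expected time, so all the quantities appearing in the calibration are finite.

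With $\lambda_c$ fixed, the heart of the argument is a sequence of local perturbations transforming an arbitrary policy $\mcP$ into $\mcP_c$. Two kinds of disagreement must be removed. At any $(h,t)$ with $|h-t| \ge c$ on which $\mcP$ does not stop with the $\mcP_c$-prescribed label, I would prune the subtree rooted there and replace it by the $\mcP_c$ stopping decision; at any $(h,t)$ with $|h-t| < c$ on which $\mcP$ stops, I would reopen the point to ``continue'' and graft in $\mcP_c$'s behavior on the subtree below. Processing these flips in a well-founded order (e.g.\ lexicographic on $(h+t, h)$) keeps each step well-defined, and after exhausting them the policy equals $\mcP_c$. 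The crucial claim is that, thanks to the choice of $\lambda_c$, every such flip changes $R_{\lambda_c}$ by a non-positive amount: outside the band a flip costs extra error probability but saves expected tosses, and inside the band the trade-off runs the other way, with $\lambda_c$ precisely tuned so that both trades come out in the right sign.

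The main obstacle I anticipate is this per-flip monotonicity check, which is not literally a one-step calculation because a single flip at an interior point replaces a whole subpolicy rather than just one outcome. I would handle it by computing the change in $R_{\lambda_c}$ as an expectation over the subsequent trajectory of the walk conditioned on first reaching $(h,t)$, and using the boundary-indifference identity defining $\lambda_c$ together with an induction on the depth of the subtree. The $h \leftrightarrow t$ symmetry of both the walk and $\mcP_c$ should further reduce each flip's net effect to a signed comparison of two binomial hitting probabilities, which is the only substantive piece of arithmetic required for the whole argument.
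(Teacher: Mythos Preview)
Your high-level plan matches the paper's: introduce a Bayes-risk objective, calibrate the tradeoff parameter to the target $c$, and transform an arbitrary $\mcP$ into $\mcP_c$ by moves that never increase the objective; the contraposition at the end is exactly how the paper deduces Theorem~\ref{thm:linear-policy} from Theorem~\ref{thm:linear-policy-beta}.

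The genuine gap is in your exterior ``prune-and-stop'' move. At a point $(h,t)$ with $|h-t|\ge c$ where $\mcP$ continues, you need that stopping now with the $\mcP_c$-label is no worse in $R_{\lambda_c}$ than following $\mcP$'s subtree from $(h,t)$. But that subtree is an \emph{arbitrary} policy, so this comparison is essentially the optimality claim itself, restricted to a subproblem. Your boundary-indifference identity only says that stopping equals one step followed by $\mcP_c$; it says nothing about one step followed by an arbitrary continuation. The ``induction on the depth of the subtree'' cannot get started either, since without further control that depth is unbounded, and processing in forward lexicographic order on $(h+t,h)$ means the whole subtree below $(h,t)$ is still the original $\mcP$ when you attempt the flip. (A secondary issue: your interior flip as described grafts an entire $\mcP_c$ subtree onto the lattice, which recolors points reachable from the origin by paths not passing through $(h,t)$; its effect on $R_{\lambda_c}$ is therefore not confined to trajectories through $(h,t)$.)

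The paper supplies precisely the missing structural device: it first \emph{truncates} $\mcP$ to agree with $\mcP_c$ beyond a large finite horizon $n$ (Step~2), paying an arbitrarily small $\gamma$ in the objective, and then peels the horizon back one level at a time (Steps~3--5). When a boundary point at level $n$ is processed, the policy beyond level $n$ is already $\mcP_c$, so the analogue of your exterior flip (Lemma~\ref{lemma:step-4-place-hitting-point}) compares stopping now against continuing and then following $\mcP_c$---an explicit linear-boundary computation via Theorem~\ref{thm:linear-boundary}---rather than against an unknown subpolicy. Likewise each interior erasure (Lemma~\ref{lemma:step-5-erase-extra-points}) is a single-cell recoloring whose continuation is already $\mcP_c$. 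This truncate-then-peel-back order is what turns the per-flip monotonicity check into a concrete inequality in $c$ and $\eps$, and yields the interval $[l_c,u_c]$ of valid tradeoff parameters for each $c$ (Remark~\ref{remark:beta-values-range}).
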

This result is indeed a special case implied by the proof of \cite{wald1948optimum}, which establishes optimality of the SPRT. In fact, \Cref{thm:linear-policy} follows as a direct corollary of the following more general result that we show:

\begin{restatable}[Optimality of the Linear Policy for Arbitrary Tradeoffs]{theorem}{theoremlinearpolicyoptimalitybeta}
\label{thm:linear-policy-beta}
   Consider the sequential hypothesis testing problem of deciding between $H_0:p=1/2-\eps$ and $H_1:p=1/2+\eps$. %
   For any tradeoff parameter $\beta > 0$, there exists a corresponding linear policy $\set{P}_c$ with threshold $c=c(\beta)$, and having profile $(\delta_c, H_c)$,
   such that for any other policy $\set{P}$ with profile $(\delta, H)$, 
   it holds that
    \begin{align}
        \label{eqn:bayes-risk-optimality-criterion}
        (\delta^+_c + \delta^-_c) + \beta(H^+_c + H^-_c) \le (\delta^+ + \delta^-) + \beta(H^+ + H^-).
    \end{align}
\end{restatable}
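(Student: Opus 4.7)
The plan is to prove the equivalent statement that for every $\beta > 0$ there is an integer threshold $c = c(\beta)$ such that the Bayes-risk functional
$$J_\beta(\mcP) := \delta^+ + \delta^- + \beta(H^+ + H^-)$$
is minimized by the linear policy $\mcP_c$. The first step is to decompose $J_\beta(\mcP)$ as a sum over lattice points $(h,t) \in \mathbb{Z}_{\geq 0}^2$: each stopping point contributes an error term equal to the probability of reaching $(h,t)$ under the hypothesis contradicted by its label, while each continuation point contributes $\beta$ times the probability of reaching $(h,t)$ under either hypothesis. This rewriting localizes the dependence of $J_\beta$ on individual coloring choices and makes the effect of local changes transparent.

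The core of the proof is a greedy transformation: given any policy $\mcP$, repeatedly perform local modifications that bring $\mcP$ closer to $\mcP_c$, each of which is shown not to increase $J_\beta$. Two natural moves suffice: (i) \emph{contract}: if a continuation point $(h, t)$ of $\mcP$ has $|h-t| \geq c$, prune $\mcP$ at $(h, t)$ by turning it into a stopping point with the majority-vote label; and (ii) \emph{expand}: if a stopping point $(h, t)$ of $\mcP$ has $|h-t| < c$, graft the appropriate sub-policy of $\mcP_c$ onto $\mcP$ at $(h, t)$. The threshold $c = c(\beta)$ is chosen so that, at the boundary $|d| = c$, the marginal cost of one further toss under $\mcP_c$ exactly balances the marginal error reduction it yields; this calibration admits a unique integer solution for each $\beta > 0$ by a monotonicity argument.

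The technically delicate part is showing that each modification does not increase $J_\beta$. For a contract move, one must upper bound the Bayes-risk contribution of an \emph{arbitrary} subpolicy rooted at $(h, t)$. I would invoke here the one promised non-elementary ingredient, a martingale argument on the biased walk: optional stopping applied to the likelihood-ratio martingale (exponential in the difference $h_n - t_n$) simultaneously yields a lower bound on the expected number of additional tosses any continuation must incur and an upper bound on the error reduction it can achieve, and together these imply that no subpolicy can beat immediate stopping once $|d| \geq c$. For an expand move the analysis is symmetric: one compares the fixed cost of stopping at $(h, t)$ against the closed-form hitting-time and hitting-distribution expressions for the biased walk on the strip $(-c, c)$ started from $d = h - t$.

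The main obstacle I anticipate is the joint calibration of $c$ and $\beta$ together with the martingale bound used in the contract move, since the inequality must hold uniformly over all possible subpolicies rooted at $(h,t)$, not just for the optimal one. Once this calibration is in place, the remaining pieces are routine: a truncation argument reduces to policies that stop almost surely, and on each finite region the symmetric difference between the current coloring and $\mcP_c$'s coloring strictly decreases at each step. Combined with the non-increase of $J_\beta$ at every step, this yields $J_\beta(\mcP_c) \leq J_\beta(\mcP)$ for every policy $\mcP$, which is exactly the stated inequality.
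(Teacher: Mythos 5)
Your proposal takes a genuinely different route from the paper's, and it has a real gap in the contract move. The paper's strategy carefully avoids ever having to reason about an arbitrary subpolicy: it first truncates the coloring so that beyond a far-out horizon the policy already \emph{is} the linear policy $\mcP_c$, and then pulls the truncation horizon inward one level at a time (Steps 3--5). At each local move, the region downstream of the cell being modified is already a pair of parallel lines, so the future errors and additional tosses of every affected trajectory are given exactly by the closed-form hitting formulas of \Cref{thm:linear-boundary}. This is what makes each local move tractable. Your contract move, by contrast, prunes an \emph{arbitrary} subpolicy rooted at a cell with $|h-t|\ge c$, and you must show that immediate stopping has Bayes-risk contribution no larger than that of \emph{every} such subpolicy. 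That is the Bellman optimality inequality for those states, which is essentially the crux of the entire problem; it does not follow from a single application of optional stopping.

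To see the gap concretely: after the majority-vote recoloring, the subpolicy's error contribution depends on the whole distribution of the exit difference $D_T$, not merely on its mean. Optional stopping on the exponential martingale gives you the identities $\E_{1/2+\eps}[\alpha^{-D_T}]=\alpha^{-(h-t)}$ and $\E_{1/2-\eps}[\alpha^{D_T}]=\alpha^{h-t}$, and Wald's identity ties $\E[T]$ to $\E[D_T]$; but the Bellman inequality then reduces to a nontrivial minimization of the Bayes-risk functional over all exit distributions compatible with these constraints (and realizable by stopping rules). Even after reducing to two-point exit distributions via an LP-vertex argument, one must verify the inequality over a two-parameter family of two-sided boundaries --- precisely the kind of calculation the paper performs, but there only against the single known boundary $\mcP_c$, never against all candidates at once. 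Until that minimization is carried out, the contract move is unsupported. Relatedly, your expand move grafts $\mcP_c$'s continuation at a stopping cell with $|h-t|<c$; the downstream cells you recolor may still be reachable through other branches of $\mcP$, so the effect on $J_\beta$ is not as local as your decomposition assumes --- the paper sidesteps this by always working inward from a far-out truncation.
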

Moreover, the precise linear policy $\mcP_c$ that satisfies the optimality criterion given in \eqref{eqn:bayes-risk-optimality-criterion} for a given value of $\beta$ is easily inferred from our analysis (see \Cref{remark:optimal-policy-characterization}), and can be efficiently determined. Concretely, there is a partitioning of the range $\beta > 0$ into contiguous non-empty intervals $[l_0,u_0], [l_1,u_1],\dots$, such that for any $\beta \in [l_c,u_c]$, $\set{P}_c$ is the linear policy which satisfies \eqref{eqn:bayes-risk-optimality-criterion}. We can then see how the existence of a $\beta$ value in each of these intervals implies \Cref{thm:linear-policy}.

The function $(\delta^+ + \delta^-) + \beta(H^+ + H^-)$ in \eqref{eqn:bayes-risk-optimality-criterion} closely resembles the \textit{Bayes risk} considered in the literature \cite[Chapter 3.12]{lehmann2005testing}. In full generality, the Bayes risk of a policy $\mcP$ is defined with respect to weight parameters $w_0, w_1$ for the failure probabilities, a cost parameter $C$ which accounts for the cost of collecting each observation, and a prior probability $\pi$ of hypothesis $H_0$. The Bayes risk $R(w_0, w_1, C, \pi)$ is then defined as
\begin{align}
    \label{eqn:bayes-risk}
    R(w_0, w_1, C, \pi) := \pi(w_0\delta^- + CH^{-}) + (1-\pi)(w_1\delta^+ + CH^+).
\end{align}
In the special case where $\pi=1/2$ and $w_0=w_1$, the Bayes risk is only a function of $w_0$ and $C$, and minimizing the Bayes risk amounts to minimizing the function $(\delta^+ + \delta^-) + \beta(H^+ + H^-)$, with $\beta=C/w_0$. In this case, \Cref{thm:linear-policy-beta} implies that for every $w_0$ and $C$, there exists a corresponding linear policy $\mcP_c$ (where $c=c(w_0, C)$) that minimizes the Bayes risk $R(w_0, C)$. In other words, \Cref{thm:linear-policy-beta} gives a complete characterization of the specific linear policy that minimizes the Bayes risk corresponding to any parameters $w_0, C$, in the special case where $w_0=w_1$ and the prior $\pi$ is uniform.

\section{Related Work}
\label{sec:related}
The Sequential Probability Ratio Test (SPRT), originally due to \cite{wald1945sequential}, tests between simple hypotheses $H_0: \theta=\theta_0$ and $H_1: \theta=\theta_1$ given a sequence of i.i.d. observations $X_1, X_2, \dots$. At each time step $n$, the test uses the log-likelihood ratio of the sequence of observations under each hypothesis as the test statistic: 
\begin{align}
    \label{eqn:sprt-test-statistic}
    \Lambda_n %
    := \log \frac{\prod_{i=1}^n \Pr_{\theta=\theta_1}[X_i]}{\prod_{i=1}^n \Pr_{\theta=\theta_0}[X_i]}.
\end{align}
The SPRT, defined with parameters $A < 0 < B$,  stops and accepts $H_1$ if $\Lambda_n \geq B$, stops and accepts $H_0$ if $\Lambda_n \leq A$, and continues collecting $X_{n+1}$ otherwise. Wald and Wolfowitz \cite{wald1948optimum} showed that the SPRT is optimal amongst all sequential hypothesis tests, minimizing the expected number of observations for given error probabilities for each hypothesis. In particular, for any SPRT with parameters $A,B$, they show the existence of parameters $w_0,w_1, C$ such that for every prior $\pi$, this SPRT minimizes the Bayes risk $R(w_0, w_1, C, \pi)$ given in \eqref{eqn:bayes-risk}. It has nevertheless been noted in the literature \cite[Chapter 4.1]{shiryaev2011optimal} that given an arbitrary Bayes risk at a specified set of parameters, it is non-trivial to \textit{reverse engineer} the thresholds $A,B$ of the SPRT that is optimal for that Bayes risk, for general simple hypothesis testing tasks. Instead, approximations for $A$ and $B$ are often used. 

For the specific setting of symmetric Bernoulli hypotheses that we focus on, we have $H_0: p=1/2-\eps$ and $H_1: p=1/2+\eps$. Let $\alpha = \frac{1/2+\eps}{1/2-\eps}$. Then, the log-likelihood ratio $\Lambda_n$ after $n$ flips, with $h$ heads, and $t$ tails, is $\log\frac{(1/2+\eps)^{h}(1/2-\eps)^{t}}{(1/2-\eps)^{h}(1/2+\eps)^{t}}=(h-t)\log \alpha$. The SPRT stops when the log-likelihood ratio exceeds or recedes below fixed constants, i.e., when $h - t \ge c_1$ or $h - t \le c_0$ for constants $c_1, c_0$. In particular, if the parameters of the SPRT are equal in magnitude, the test halts when $|h-t|=c$, giving us a symmetric parallel-line policy.

For a history of the evolution and applications of sequential testing, we refer the reader to \cite{ghosh1991brief}. Here, we present a brief summary of past proof techniques of the optimality of the SPRT. Most proof strategies introduce the Bayes risk as an objective function for optimization. Wald and Wolfowitz's original proof minimizes the average risk by defining a gain function to determine whether the information from additional samples is advantageous. They use continuity arguments to show that the gain function is positive only in the closed interval $[A, B]$ for which we continue sampling. A simplified proof by LeCam and Lehmann constructs an auxiliary problem for every SPRT for which it is the optimal strategy \cite{lehmann2005testing}. The proof was further refined by Matthes, who applies a mapping theorem from complex analysis to prove the existence of optimal parameters without explicitly constructing them \cite{matthes1963optimality}.

For the specific case of Bernoulli trials, Girshick identified exact parameters guaranteeing compliance with the failure probability without relying on approximations \cite{girshick1946contributions}. He derived the optimal difference policy, identifying an optimal SPRT for each given choice of failure probability and $\eps$. Girshick reduces the computation of the power curve to solving a system of linear equations. His approach uses generating functions to derive exact expressions for the probability that the sequential test terminates after $n$ steps. In contrast to all these techniques, our algorithmic proof, which uses local greedy fixes on a geometric grid, is fundamentally different, and arguably more elementary.

\section{Preliminaries and Proof Overview}
\label{sec:prelims-overview}

\subsection{Preliminaries}
\label{sec:prelims}

We frame the problem of sequential testing for Bernoulli trials in terms of analyzing the hitting times of biased random walks over a grid, where each policy is described by a set of stopping points. Namely, any policy $\set{P}$ can be thought of simply as a coloring of the infinite two-dimensional grid $\mathbb{Z}_{\ge 0}^2$ with three colors: red, blue and white. Every cell $(h,t)$ in the grid (for $h, t \in \mathbb{Z}_{\ge 0}$) represents a state where we have observed $h$ heads and $t$ tails. A white cell indicates that the policy defers the decision and elects to toss the coin again at that state. A red/blue colored cell represents a \textit{hitting point} of the policy: if the cell is colored blue, the policy stops and accepts $H_1: p=\frac{1}{2}+\eps$. If the cell is colored red, the policy stops and accepts $H_0:p=\frac{1}{2}-\eps$. Refer to \Cref{fig:policy} for an illustration of an arbitrary policy. In what follows, we will interchangeably identify a policy $\mcP$ with its coloring.

With this view, the expected number of coin tosses by the policy $\mcP$ is simply the expected \textit{hitting time} of a random walk that starts at $(0,0)$, where we move right with probability $p$ and down with probability $1-p$. We note that in this representation, a policy $\mcP$ only keeps track of the \textit{count} of heads observed at any point, and does not keep track of the precise sequence of the coin flip outcomes. This representation can be applied without loss of generality as the count of heads is a sufficient statistic for the bias of the coin. We constrain our analysis to policies with finite expected hitting time, since a policy with infinite expected hitting time cannot be optimal.

\begin{figure}[H]
    \centering
    \includegraphics[scale=0.7]{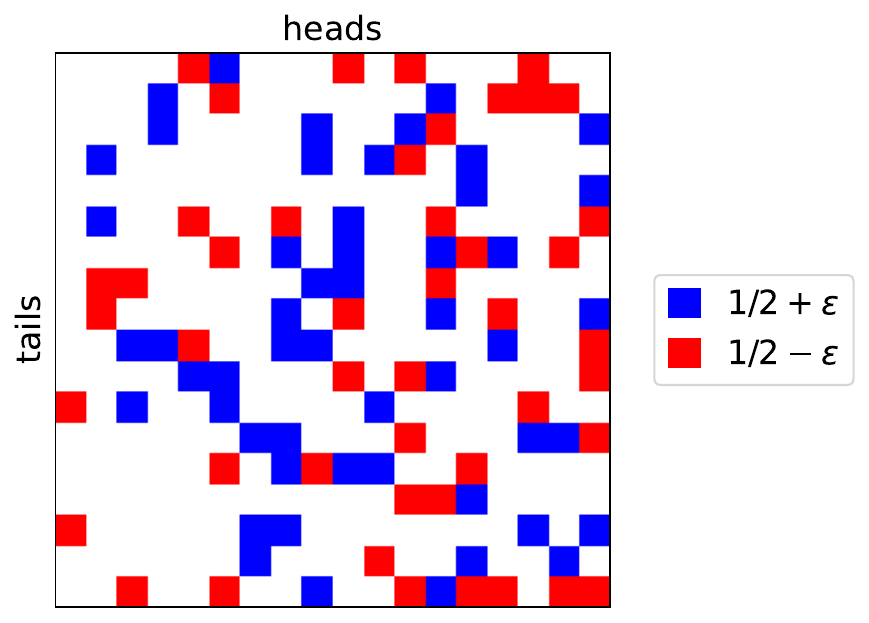}
    \caption{A policy $\set{P}$ truncated at a finite horizon. A point $(h,t)$ in the grid represents a state where we have observed $h$ heads and $t$ tails. The top left cell corresponds to the point $(0,0)$.}
    \label{fig:policy}
\end{figure}

The only technical tool we will require in our proof is the following consequence of the Martingale Stopping Theorem, also known as Doob's Optional Stopping Theorem, which characterizes the hitting time of a one-dimensional random walk. For a derivation of this theorem, we refer the reader to Theorem 3 in \cite{valiant23}.

\begin{theorem}[Hitting times for linear boundaries]
    \label{thm:linear-boundary}
    Suppose we start at the point $(h,t)=(0,0)$, and independently at every timestep, we either toss heads with probability $p$, and tails with probability $1-p$. Let $r=\frac{1}{p}-1$. Let the random variable $T$ denote the first time $h-t=b$ or $h-t=-a$, and $Z_T$ denote $h-t$ at this time. Then,
    \begin{align}
        &\Pr[Z_T=-a]=1-\Pr[Z_T=b] = \frac{1-r^{b}}{r^{-a}-r^{b}}, \label{eqn:linear-policy-probability} \\
        &\E[T] = \frac{b-(a+b)\left(\frac{1-r^{b}}{r^{-a}-r^{b}}\right)}{2p-1}. \label{eqn:linear-policy-hitting-time}
    \end{align} 
\end{theorem}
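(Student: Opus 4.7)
The plan is to deduce both identities from Doob's Optional Stopping Theorem applied to two classical martingales associated with the difference walk $S_n := h_n - t_n$, which starts at $S_0 = 0$ and increments by $X_i \in \{+1,-1\}$ with $\Pr[X_i = +1] = p$. The stopping time of interest is $T = \inf\{n \geq 0 : S_n \in \{-a, b\}\}$, with $Z_T = S_T$.

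For the exit-probability identity I would introduce the exponential martingale $M_n := r^{S_n}$, where $r = (1-p)/p$. A one-line check gives $\E[r^{X_i}] = p r + (1-p) r^{-1} = (1-p) + p = 1$, so $M_n$ is a nonnegative martingale with $M_0 = 1$. For the expected hitting time I would use the linear martingale $N_n := S_n - (2p-1)\,n$, which is a martingale since $\E[X_i] = 2p-1$.

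Before invoking optional stopping I must argue $\E[T] < \infty$. This is standard: in any block of $a+b$ consecutive tosses, the walk is forced to exit $\{-a,\ldots,b\}$ if every toss in the block is a head, an event of probability at least $p^{a+b} > 0$ by independence; so $T$ is stochastically dominated by $(a+b)$ times a geometric random variable, yielding $\E[T] < \infty$ and $T < \infty$ almost surely. The stopped process $M_{n \wedge T}$ is then uniformly bounded by $\max(r^{-a}, r^b)$, so bounded convergence gives $\E[M_T] = M_0 = 1$; and since the increments of $N_n$ are bounded by $1 + |2p-1|$ while $T$ is integrable, the bounded-increments version of optional stopping gives $\E[N_T] = 0$, i.e., $\E[S_T] = (2p-1)\,\E[T]$.

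Writing $q := \Pr[Z_T = -a]$ and using $S_T \in \{-a, b\}$, the identity $\E[M_T] = 1$ becomes $q\, r^{-a} + (1-q)\, r^{b} = 1$, which rearranges to $q = (1-r^b)/(r^{-a} - r^b)$, establishing \eqref{eqn:linear-policy-probability}. Substituting $\E[S_T] = -a q + b(1-q) = b - (a+b) q$ into $\E[S_T] = (2p-1)\,\E[T]$ and dividing by $2p-1$ yields \eqref{eqn:linear-policy-hitting-time}. The only subtle step is the application of optional stopping, which I expect to be the main obstacle to a fully rigorous write-up; it is handled by uniform boundedness of $M_{n \wedge T}$ for the exponential martingale, and by bounded increments of $N_n$ together with $\E[T] < \infty$ for the linear one. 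The degenerate case $p = 1/2$ is excluded since $2p-1$ appears in the denominator, and can be recovered separately by taking the limit $r \to 1$ (or by using the martingales $S_n$ and $S_n^2 - n$ directly).
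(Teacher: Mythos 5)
Your proof is correct, and it uses precisely the approach the paper indicates: the paper does not prove this theorem itself but cites it as a "consequence of the Martingale Stopping Theorem" and refers to an external source for the derivation. Your argument—applying optional stopping to the exponential martingale $r^{S_n}$ and to the Wald martingale $S_n-(2p-1)n$, with a correct justification of $\E[T]<\infty$ and of the applicability of optional stopping in each case—is the standard derivation and fills in those details cleanly.
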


In our proof, we will use the fact that upon fixing $a+b=2c$, $\E[T]$ is bounded above by $c^2$; this can be verified from \eqref{eqn:linear-policy-hitting-time}.

\subsection{Proof Overview}
\label{sec:overview}

We first recall our objective in \Cref{thm:linear-policy-beta}: for any tradeoff parameter $\beta$, we want to show the existence of a linear policy $\mcP_c$---one which halts when $|h-t|=c$---having profile $(\delta_c, H_c)$, such any other policy $\mcP$ having profile $(\delta, H)$ satisfies
\begin{align*}
    (\delta^+_c + \delta^-_c) + \beta(H^+_c + H^-_c) \le (\delta^+ + \delta^-) + \beta(H^+ + H^-).
\end{align*}
For a policy $\mcP$ having profile $(\delta, H)$, let $R_\mcP(\beta)=(\delta^+ + \delta^-) + \beta(H^+ + H^-)$ denote its Bayes risk. We want to show that $R_{\mcP_c}(\beta) \le R_{\mcP}(\beta)$ for every policy $\mcP$.

To this end, fix a policy $\mcP$, and consider its representation as a coloring of the two-dimensional grid, as in \Cref{fig:policy}. In order to show that there exists $\mcP_c$ such that $R_{\mcP_c}(\beta) \le R_{\mcP}(\beta)$, we will show that, for a certain value of $c$, we can convert the geometric representation of $\mcP$, step-by-step, into the geometric representation of $\mcP_c$. That is, we can construct a finite series of policies $\mcQ_1, \mcQ_2,\dots,\mcQ_m$, where $\mcQ_1=\mcP$, $\mcQ_m=\mcP_c$, and each $\mcQ_{i+1}$ is a structured modification of the coloring given by $\mcQ_{i}$. Furthermore, we can ensure that $R_{\mcQ_1}(\beta) \le R_{\mcQ_{2}}(\beta)+\gamma$, and that $R_{\mcQ_{i+1}}(\beta) \le R_{\mcQ_i}(\beta)$ for every $i=2,3,\dots,m-1$, where $\gamma$ can be taken to be arbitrarily small. Chaining together the inequalities will then establish the result.

We will now elaborate on how this chain of intermediate policies is constructed. The procedure is illustrated in \Cref{fig:overview} below, and consists of 5 main steps, each of which is detailed in \Cref{sec:proof_new}.

\begin{figure}[t]
    \centering
    \begin{adjustbox}{center}
    \begin{tikzpicture}[baseline=(current bounding box.center)]
        \node[anchor=south west, inner sep=0] (img1) at (0,0) {
            \begin{minipage}{0.265\textwidth}
                \centering
                \includegraphics[width=\linewidth]{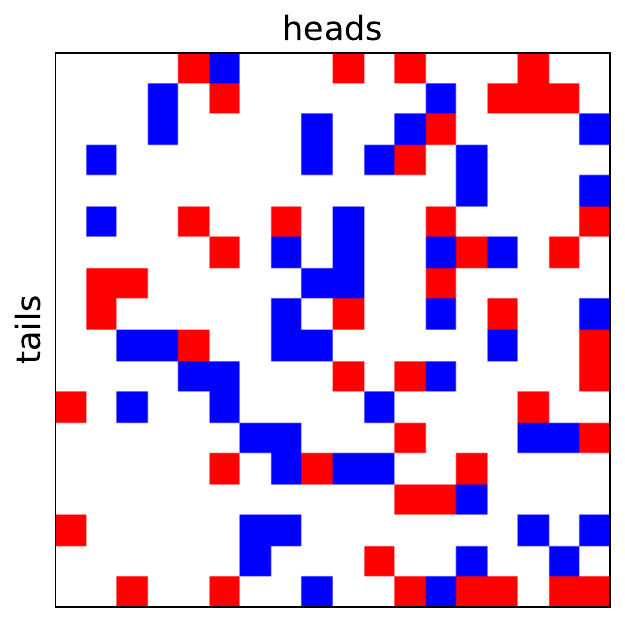}
            \end{minipage}
        };

        \node (arrow) at (5.2,2) {$\xRightarrow[\text{}]{\text{Step 1}}$};

        \node[anchor=south west, inner sep=0] (img2) at (6,0) {
            \begin{minipage}{0.265\textwidth}
                \centering
                \includegraphics[width=\linewidth]{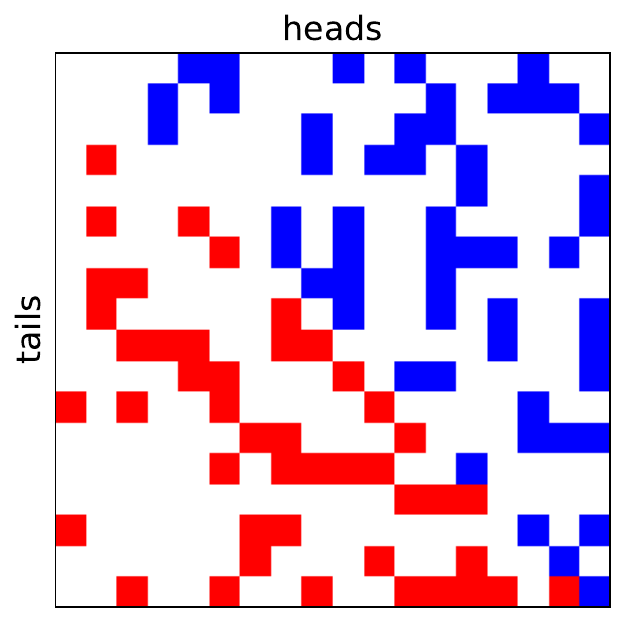}
            \end{minipage}
        };

        \node (arrow) at (11.2,2) {$\xRightarrow[]{\text{Step 2}}$};

        \node[anchor=south west, inner sep=0] (img2) at (12,0) {
            \begin{minipage}{0.265\textwidth}
                \centering
                \includegraphics[width=\linewidth]{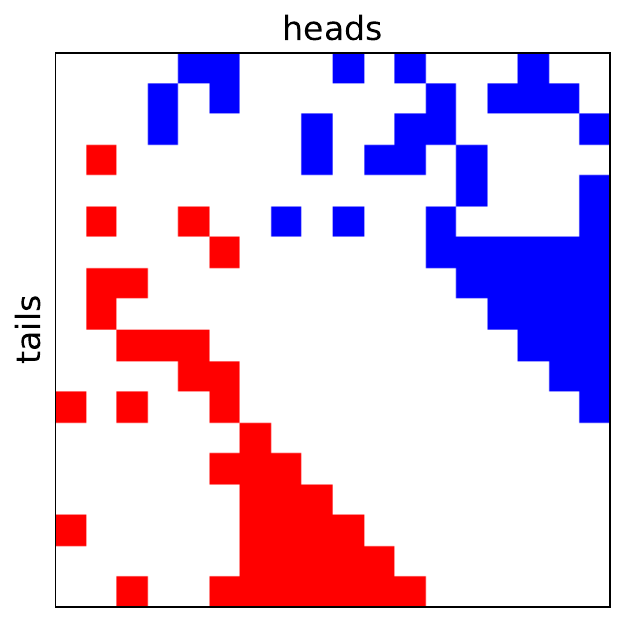}
            \end{minipage}
        };

        \node[anchor=south west, inner sep=0] (img1) at (0,-4.8) {
            \begin{minipage}{0.265\textwidth}
                \centering
                \includegraphics[width=\linewidth]{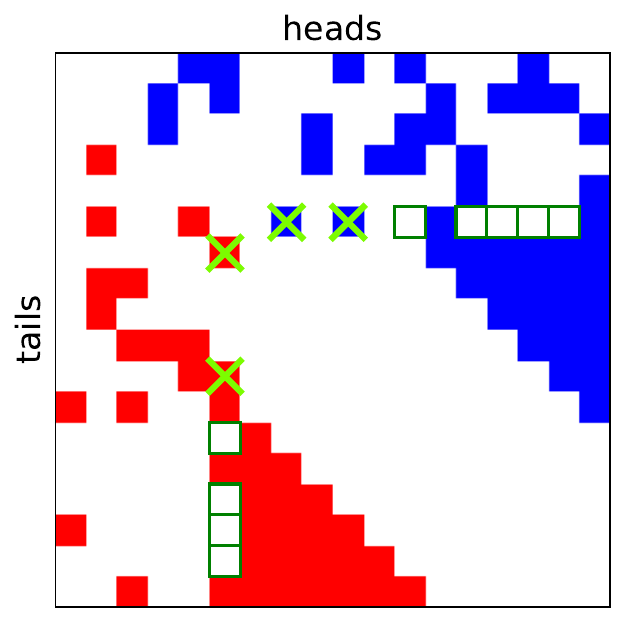}
            \end{minipage}
        };

        \node (arrow) at (5.2,-2.8) {$\xRightarrow[\text{}]{\text{Steps 3,4,5}}$};

        \node[anchor=south west, inner sep=0] (img1) at (6,-4.8) {
            \begin{minipage}{0.265\textwidth}
                \centering
                \includegraphics[width=\linewidth]{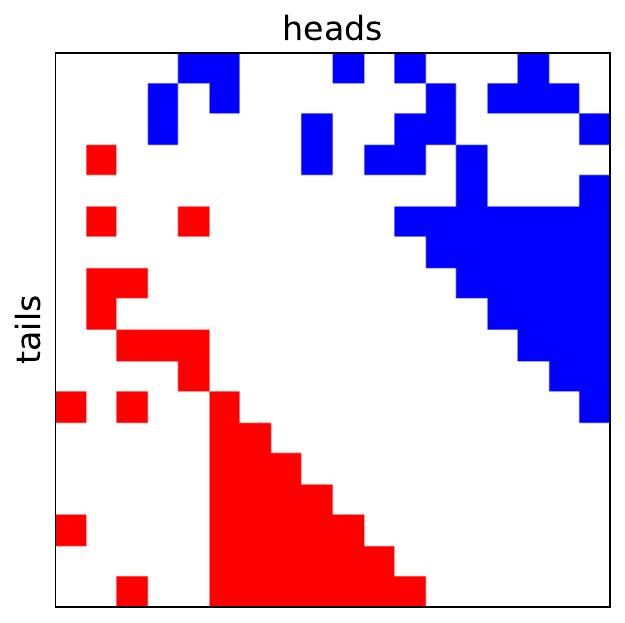}
            \end{minipage}
        };

        \node (arrow) at (11.2,-2.8) {$\xRightarrow[\text{repeatedly}]{\text{Steps 3,4,5}}$};

        \node[anchor=south west, inner sep=0] (img1) at (12,-4.8) {
            \begin{minipage}{0.265\textwidth}
                \centering
                \includegraphics[width=\linewidth]{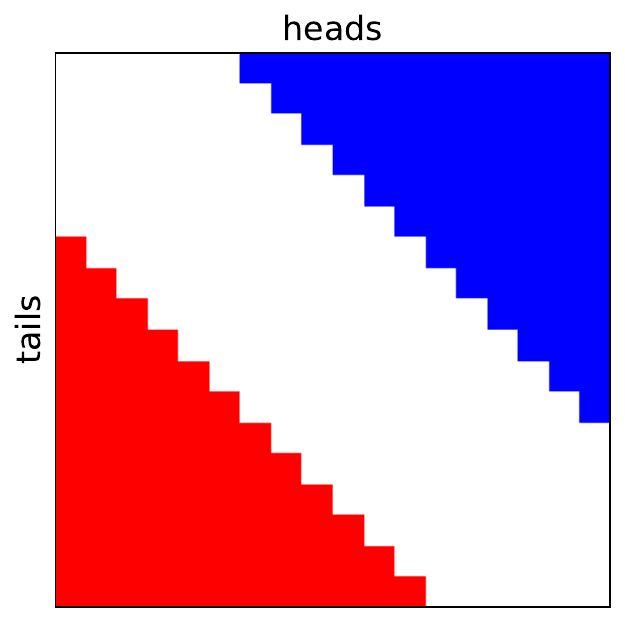}
            \end{minipage}
        };
        
    \end{tikzpicture}
    \end{adjustbox}
    \caption{An overview of our algorithmic proof, which systematically converts any arbitrary policy $\mcP$ to a linear policy $\mcP_c$, while ensuring that the Bayes risk does not increase. Step 1 makes the upper and lower triangles monochromatic. Step 2 truncates the policy to the linear policy $\mcP_c$ beyond a suitably large horizon. Steps 3,4,5 together extend the linear policy by one level, by adding the outlined cells, and erasing the crossed out cells; repeating these latter steps finitely many times results in the desired linear policy $\mcP_c$.}
    \label{fig:overview}
\end{figure}

We begin by observing that cells in the policy that have not been colored according to the more frequent outcome at that cell can be recolored to match it at no cost. That is, any cell $(h,t)$ where $h\ge t$, and any cell $(h,t)$ where $h<t$, can be recolored to blue and red respectively, if this is not already the case in $\mcP$. This recoloring does not affect any hitting times. Furthermore, the sum of failure probabilities in the recolored policy can only decrease, essentially because the likelihood that a $1/2+\eps$ biased trajectory arrives at a point with $h \ge t$ is no less than this likelihood is for a $1/2-\eps$ trajectory (and vice versa for a point with $h<t$). Thus, Step 1 simply constitutes coloring the upper and lower triangles of the policy in accordance with the more frequent outcome.

As our Step 2, we truncate the recolored policy to be a linear policy beyond a suitably large finite horizon $n$. By choosing $n$ to be appropriately large, the hitting trajectories that get affected after the truncation are necessarily quite long. In particular, the total probability of these affected trajectories can be bounded above by a function $p(n)$, which goes to $0$ as $n$ increases. Moreover, because we truncate to a linear policy, we can also bound the expected hitting time of any trajectory that enters the channel between the parallel lines of the linear policy (which is of width $2c$), using \Cref{thm:linear-boundary}, by $O(c^2)$. In total, we only increase $R_\mcP(\beta)$ by a quantity that is $o_n(1)$; so, choosing $n$ large enough can make the increase be at most $\gamma$ for any desired $\gamma$. We note that this truncation step works for any value of $c$.

Our strategy hereafter is to pull the horizon of truncation from $n+1$ to 0, which will result in a fully linearized policy. We do this one step at a time---that is, we systematically extend the truncation horizon from $n+1$ to $n,n-1,\dots,0$. To this end, let us think about what it entails to extend the truncation horizon by one level from $n+1$ to $n$: we require coloring some points, and also erasing some extraneous hitting points at $n$ to be in accordance with how the linear policy should be (see \Cref{fig:overview}). Steps 3, 4 and 5 of our algorithmic procedure together achieve precisely this outcome, and constitute the bulk of our main technical arguments. In analyzing how adding and erasing points affects failure probabilities and hitting times, we again rely heavily on the property that trajectories that get modified necessarily pass through the channel of truncation, which is spanned on both sides by a pair of parallel lines. This crucially allows us to analyze the failure probabilities and hitting times of these trajectories, once they have entered this channel, using \Cref{thm:linear-boundary}. It is in the analysis of these steps that we require carefully choosing $c$ as a function of $\beta$, so that the addition and erasure of points which constitutes extending the truncation does not increase $R_\mcP(\beta)$. Our calculations show that there is always a valid choice of $c$ for this; indeed, the same $c$ works for extending the truncation at all the levels from $n$ to 0. Summarily, we can repeat Steps 3, 4, and 5 for a total of $n+1$ times in order to end up with the fully linearized policy, while ensuring that $R_\mcP(\beta)$ never increases. With this overview, we now move on to formalizing all these arguments in the next section.

\section{Proof of Optimality of the Linear Policy}
\label{sec:proof_new}

\subsection*{Step 1: Recoloring the upper and lower triangles}
\label{sec:step-1-fix-colors}

We first make a simple observation that a policy should, at any hitting point, predict the bias in the direction of the most frequent outcome. Geometrically (see \Cref{fig:step-1-fix-colors}), any policy can be improved at no cost by not having any red hitting points (i.e., declaring $1/2-\eps$) in the upper triangle where $h \ge t$, and any blue hitting points (i.e., declaring $1/2+\eps$) in the lower triangle where $t > h$. In particular, we can recolor every such point, without changing the hitting time of the policy, while decreasing the failure probability.

\begin{figure}[H]
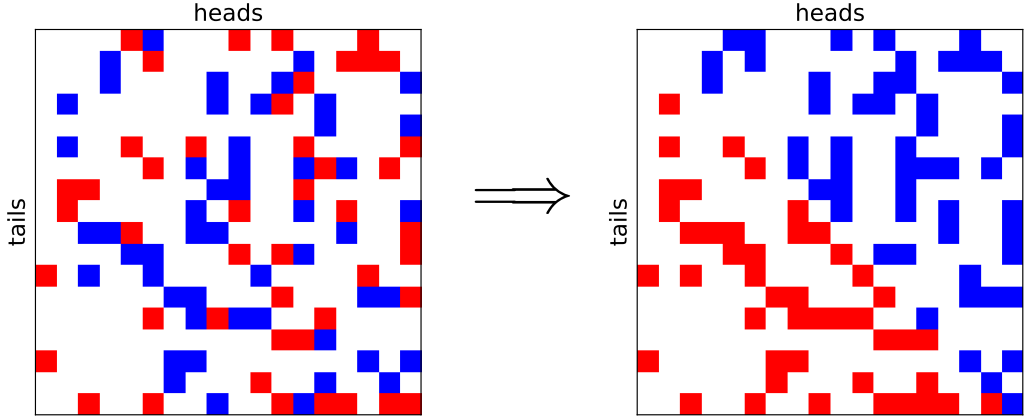

    \centering
    \begin{adjustbox}{center}
    \begin{tikzpicture}[baseline=(current bounding box.center)]
        \node[anchor=south west, inner sep=0] (img1) at (0,0) {
            \begin{minipage}{0.35\textwidth}
                \centering
                \includegraphics[width=\linewidth]{img/policy-no-legend.pdf}
            \end{minipage}
        };

        \node (arrow) at (7,3) {\Huge$\Longrightarrow$};

        \node[anchor=south west, inner sep=0] (img2) at (8,0) {
            \begin{minipage}{0.35\textwidth}
                \centering
                \includegraphics[width=\linewidth]{img/policy-diagonal-fixed.pdf}
            \end{minipage}
        };
    \end{tikzpicture}
    \end{adjustbox}
    \caption{Fixing colors of the hitting points in the upper and lower triangles of the policy.}
    \label{fig:step-1-fix-colors}
\end{figure}

\begin{claim}[Monochromatic Triangles]
    \label{claim:step-1-fix-colors}
    Let $\set{P}$ be a policy %
    with profile $(\delta_1, H_1)$. Let $\set{P}'$ be the policy that is otherwise identical to $\set{P}$, but colors every red hitting point $(h,t)$ in $\set{P}$ that satisfies $h\ge t$ as blue instead. 
    Let $(\delta_2, H_2)$ be the profile of $\set{P}'$. Then,
    \begin{align*}
        H_2 &= H_1 \\
        \delta^+_2+\delta^-_2 &\le \delta^+_1+\delta^-_1.
    \end{align*}
    A similar statement holds true for $\set{P}'$ that colors every blue hitting point $(h,t)$ in $\set{P}$ that satisfies $h < t$ as red instead.\footnote{Diagonal points can be colored either blue or red, since the probability that a trajectory arrives at a diagonal point is the same under either hypothesis. We color the diagonal points blue for concreteness.}
\end{claim}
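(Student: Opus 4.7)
The plan is to observe that the recoloring changes only labels at stopping points, not which cells are stopping points, so the induced stopping time of the random walk is unchanged and thus $H_2 = H_1$ immediately. The real content is the failure probability inequality.

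First I would rewrite the change in total failure probability as a sum over the set $R_{\text{up}} := \{(h,t) : (h,t) \text{ is red in } \mcP \text{ and } h \ge t\}$ of points that get flipped. Concretely, letting $q_p(h,t)$ denote the probability under bias $p$ that the random walk starting at $(0,0)$ stops at the hitting point $(h,t)$ in $\mcP$, one obtains
\begin{align*}
    (\delta^+_2 + \delta^-_2) - (\delta^+_1 + \delta^-_1) = \sum_{(h,t) \in R_{\text{up}}} \bigl( q_{1/2-\eps}(h,t) - q_{1/2+\eps}(h,t) \bigr),
\end{align*}
since flipping a red point at $(h,t)$ to blue removes $q_{1/2+\eps}(h,t)$ from $\delta^+$ (false $H_0$-predictions under $H_1$) and adds $q_{1/2-\eps}(h,t)$ to $\delta^-$.

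The key pointwise inequality is that $q_{1/2+\eps}(h,t) \ge q_{1/2-\eps}(h,t)$ whenever $h \ge t$. I would prove this by noting that $q_p(h,t)$ is a sum over admissible lattice paths from $(0,0)$ to $(h,t)$ (those whose interior avoids every other hitting point of $\mcP$) of the path probability $p^h(1-p)^t$. Since admissibility does not depend on $p$ and every such path has exactly $h$ right-steps and $t$ down-steps, we can write $q_p(h,t) = N(h,t) \cdot p^h(1-p)^t$ for a nonnegative integer $N(h,t)$ depending only on the coloring of $\mcP$. Hence
\begin{align*}
    \frac{q_{1/2+\eps}(h,t)}{q_{1/2-\eps}(h,t)} = \left(\frac{1/2+\eps}{1/2-\eps}\right)^{h-t} \ge 1
\end{align*}
whenever $h \ge t$ (with the convention that the ratio is $1$ when both sides are $0$).

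Summing the pointwise inequality over $R_{\text{up}}$ gives $(\delta^+_2 + \delta^-_2) - (\delta^+_1 + \delta^-_1) \le 0$, which is exactly the claim. The symmetric statement for flipping blue points with $h<t$ to red is proved identically, using that $(h-t)$ is now negative so the ratio $q_{1/2-\eps}/q_{1/2+\eps}$ is $\ge 1$. There is no real obstacle here; the only thing to be careful about is the formal justification that $N(h,t)$ is bias-independent (it is determined purely by the set of hitting cells, which is the same in $\mcP$ and $\mcP'$), and the bookkeeping that flipping a point swaps the contributions of $q_{1/2+\eps}$ and $q_{1/2-\eps}$ between $\delta^+$ and $\delta^-$ with the signs above.
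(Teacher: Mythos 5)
Your proposal is correct and is essentially the same argument as the paper's: both hinge on the fact that every path to $(h,t)$ has probability $p^h(1-p)^t$, so the ratio of arrival probabilities under the two hypotheses is $\left(\frac{1/2+\eps}{1/2-\eps}\right)^{h-t}\ge 1$ when $h\ge t$. The only cosmetic difference is that the paper sums the pointwise inequality over individual trajectories while you first aggregate to the per-cell quantity $q_p(h,t)=N(h,t)\,p^h(1-p)^t$; the content is identical.
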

\begin{proof}
    Note that every hitting point in $\set{P}$ continues to remain a hitting point in $\set{P}'$, which means that the probability of arriving at any hitting point is unchanged. Thus, $H^+_2=H^+_1$ and $H^-_2=H^-_1$.

    After recoloring all the red hitting points $(h,t)$ in $\set{P}$ that satisfy $h \ge t$ to blue in $\set{P}'$, note that only trajectories that stop at any such $(h, t)$ are affected. Since each such trajectory is now blue, it follows that $\delta^+_2 < \delta^+_1$ and that $\delta^-_2 >\delta^-_1$. Let $\Delta\delta^+=\delta^+_2-\delta^+_1$ and $\Delta\delta^-=\delta^-_2-\delta^-_1$. Let $S$ denote the set of all recolored hitting points, and let $\mathcal{T}_{(h,t)}$ denote the set of trajectories that arrive and stop at some recolored hitting point $(h,t) \in S$. Then, we have that
    \begin{align*}
        &\Delta\delta^+ = -\sum_{(h,t) \in S}\sum_{\tau \in \mathcal{T}_{(h,t)}}\Pr_{\frac12+\eps}[\tau] \quad \text{ and } \quad 
        \Delta\delta^- = \sum_{(h,t)\in S}\sum_{\tau \in \mathcal{T}_{(h,t)}}\Pr_{\frac12-\eps}[\tau],
    \end{align*}
    which means that
    \begin{align*}
        \Delta\delta^+ + \Delta\delta^- &= \sum_{(h,t) \in S}\sum_{\tau \in \mathcal{T}_{(h,t)}}\left[\Pr_{\frac12-\eps}[\tau]-\Pr_{\frac12+\eps}[\tau]\right] \\
        &= \sum_{(h,t) \in S}\sum_{\tau \in \mathcal{T}_{(h,t)}}\left[\left(\frac12-\eps\right)^{h}\left(\frac12+\eps\right)^{t}-\left(\frac12-\eps\right)^{t}\left(\frac12+\eps\right)^{h}\right] \\
        &= \sum_{(h,t) \in S}\sum_{\tau \in \mathcal{T}_{(h,t)}}\left[\left(\frac12-\eps\right)^{t}\left(\frac12+\eps\right)^{t}\left[ \left(\frac12-\eps\right)^{h-t}-\left(\frac12+\eps\right)^{h-t}\right]\right] \le 0,
    \end{align*}
    where the last inequality follows because $h \ge t$.
\end{proof}

\subsection*{Step 2: Truncating to a linear policy}
\label{sec:step-2-truncate-to-linear}

In order to constrain our analysis to a finite number of steps, we need to contain the differences between an arbitrary policy and the target linear policy to a finite region. We do so by \emph{truncating} policies after a far-enough point to match the target linear policy, with arbitrarily small impact on the Bayes risk. For a sufficiently large horizon, the probability of any trajectory being altered is exponentially small, and almost all trajectories are identical among both policies. An illustration of such a truncation is in \Cref{fig:step-2-truncate-to-linear}.

\begin{definition}[Truncated Linear Policy]
    \label{def:truncated-linear-policy}
    Given a policy $\set{P}$, the truncated linear policy $\set{P}_{c,n}$ (for $c, n \ge 1$) mirrors the linear policy $\set{P}_c$ in the region $h, t \geq n$. All other points are colored according to $\P$.
\end{definition}

\begin{figure}[H]
    \centering
    \begin{adjustbox}{center}
    \begin{tikzpicture}[baseline=(current bounding box.center)]
        \node[anchor=south west, inner sep=0] (img1) at (0,0) {
            \begin{minipage}{0.35\textwidth}
                \centering
                \includegraphics[width=\linewidth]{img/policy-diagonal-fixed.pdf}
            \end{minipage}
        };

        \node (arrow) at (7,3) {\Huge$\Longrightarrow$};

        \node[anchor=south west, inner sep=0] (img2) at (8,0) {
            \begin{minipage}{0.35\textwidth}
                \centering
                \includegraphics[width=\linewidth]{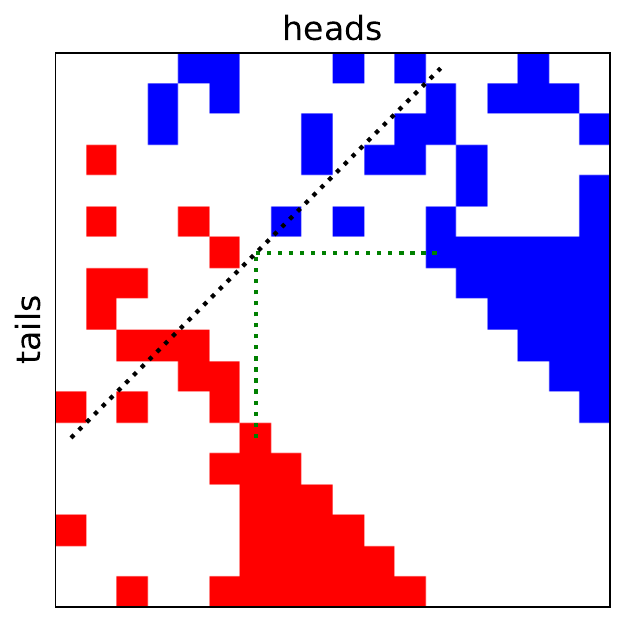}
            \end{minipage}
        };
    \end{tikzpicture}
    \end{adjustbox}
    \caption{Truncating a policy to a linear policy. Observe that any trajectory which stops before intersecting the dotted black line is unaffected. Only trajectories which intersect the green dotted lines may stop later; all other trajectories are unaffected or trimmed.}
    \label{fig:step-2-truncate-to-linear}
\end{figure}

\begin{claim}[Truncation to Linear Policy]
    \label{claim:step-2-truncation-to-linear}
    Let $\set{P}$ be a recolored policy having profile $(\delta, H)$.
    Fix $c \in \mathbb{Z}_{\ge 1}$ and  $\beta \ge 0$. For any $\gamma > 0$, there exists a sufficiently large $n$ such that the truncated linear policy $\set{P}_{c,n+1}$, having profile $(\delta_{c,n+1}, H_{c, n+1})$
    satisfies
    \begin{align}
        (\delta^+_{c,n+1} + \delta^-_{c,n+1}) + \beta(H^+_{c,n+1} + H^-_{c,n+1}) \le (\delta^+ + \delta^-) + \beta(H^+ + H^-) + \gamma.
    \end{align}
\end{claim}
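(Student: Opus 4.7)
The plan is to exploit the fact that $\mcP$ and $\mcP_{c,n+1}$ agree on every cell outside the \emph{inner region} $R_n := \{(h,t) : h \geq n+1 \text{ and } t \geq n+1\}$, so only trajectories that reach $R_n$ before stopping can contribute to any difference in the two profiles. Let $E_n$ denote this event. On $E_n^c$ the two policies induce identical trajectories, so the entire change in the Bayes risk is supported on $E_n$.

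First, I would show that $\Pr_p[E_n] \to 0$ under each hypothesis $p \in \{1/2-\eps, 1/2+\eps\}$ as $n \to \infty$. Reaching $R_n$ requires taking at least $2(n+1)$ steps, so by Markov's inequality applied to the (finite) expected hitting time of $\mcP$,
\[
    \Pr_p[E_n] \;\leq\; \Pr_p\bigl[T_\mcP \geq 2(n+1)\bigr] \;\leq\; \frac{H^{\pm}}{2(n+1)} \;\longrightarrow\; 0.
\]

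Next I would bound the two contributions to the Bayes risk on $E_n$ separately. Each affected trajectory contributes at most $1$ to the failure probability, so $|\delta^{\pm}_{c,n+1} - \delta^{\pm}| \leq \Pr_p[E_n]$. For the expected hitting time, I would condition on the entry point $(h_0, t_0)$ of a trajectory into $R_n$. Under $\mcP_{c,n+1}$ the trajectory subsequently follows $\mcP_c$: if $|h_0 - t_0| \geq c$ then $(h_0,t_0)$ is already a hitting point of $\mcP_c$, so stopping is immediate; otherwise, applying \Cref{thm:linear-boundary} (translated to begin at $(h_0,t_0)$) with $a = c + (h_0 - t_0)$ and $b = c - (h_0 - t_0)$, so that $a + b = 2c$, bounds the expected additional time to hit $|h - t| = c$ by $c^2$, using the inequality stated just after \Cref{thm:linear-boundary}. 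Meanwhile, under $\mcP$ the trajectory takes a nonnegative additional time past the entry time to stop, so the \emph{excess} hitting time incurred by $\mcP_{c,n+1}$ over $\mcP$ on each affected trajectory is at most $c^2$, giving $H^{\pm}_{c,n+1} - H^{\pm} \leq c^2 \, \Pr_p[E_n]$.

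Combining these bounds yields
\[
    R_{\mcP_{c,n+1}}(\beta) - R_{\mcP}(\beta) \;\leq\; (1 + \beta c^2)\bigl(\Pr_{1/2+\eps}[E_n] + \Pr_{1/2-\eps}[E_n]\bigr),
\]
and the right-hand side can be made at most $\gamma$ by taking $n$ sufficiently large. The main subtlety I anticipate is handling the ``overshoot'' case in which a trajectory enters $R_n$ at a point with $|h_0 - t_0| \geq c$, since \Cref{thm:linear-boundary} does not literally apply there; however, such trajectories stop at their entry point under $\mcP_{c,n+1}$ and contribute zero additional time, so the uniform $c^2$ bound still holds across all entry points.
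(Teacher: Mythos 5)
Your proof is correct and takes essentially the same approach as the paper: both identify that differences are confined to trajectories reaching the inner region $\{h,t \geq n+1\}$, bound the probability of that event via the finiteness of the expected hitting time, and bound the excess hitting time by $c^2$ via \Cref{thm:linear-boundary}. Your use of Markov's inequality makes the rate explicit where the paper simply notes $p(n)\to 0$, and your explicit treatment of the overshoot case mirrors the paper's restriction to entry points with $0 \le i \le c$.
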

\begin{proof}

    For any $n$, let $p(n)$ denote the probability of the policy $\set{P}$ realizing a trajectory that tosses at least $n$ times, i.e.,
    \begin{align*}
        \label{def:p(n)}
        p(n) := \max\left(\Pr_{\half+\eps}\left[\text{trajectory with $\ge n$ tosses}\right], \Pr_{\half-\eps}\left[\text{trajectory with $\ge n$ tosses}\right]\right).
    \end{align*}
    In other words, $p(n)$ sums up the probabilities of all the trajectories that pass through the line $h+t=n$. Observe that $p(n)$ must satisfy $\lim_{n \to \infty}p(n)=0$; otherwise, at least one of $H^+, H^-$ will be infinite.
    
    Now suppose we truncate $\set{P}$ starting at position $n+1$ to the linear policy with intercept $c$. Then, observe that all trajectories which are modified \textit{must} pass through the line $h+t=2(n+1)$ (this is the dotted black line in \Cref{fig:step-2-truncate-to-linear}). 
    That is, these must realize at least $2n+2$ tosses. The probability of realizing such trajectories is, by definition, at most $p(2n+2)$. Consequently, each of the failure probabilities in $\delta_{c,n+1}$ are each increased by at most $p(2n+2)$.

    Next, the only hitting trajectories that can \textit{increase} in their hitting time due to the truncation \emph{must} intersect the set of points $\{(\text{heads}=n+1, \text{tails}=n+1+i)\}$ or $\{(\text{heads}=n+1+i, \text{tails}=n+1)\}$ for $0 \le i \le c$ (i.e., the green dotted segments in \Cref{fig:step-2-truncate-to-linear}). All other trajectories not passing through this set are either trimmed or unaffected in the truncated policy.
    
    Conditioned on arriving at any point in this set, the expected additional time before stopping in the truncated linear policy is at most $c^2$ by \eqref{eqn:linear-policy-hitting-time}. Thus, for trajectories which intersect this set, the expected hitting time may increase at most by $c^2$. Since the probability of intersecting this set is again at most $p(2n+2)$, we can conclude that
    \begin{align*}
        (\delta^+_{c,n+1} + \delta^-_{c,n+1}) + \beta(H^+_{c,n+1} + H^-_{c,n+1}) &\le (\delta^+ + \delta^- + 2p(2n+2)) + \beta(H^+ + H^- + 2c^2 p(2n+2)) \\
        &= (\delta^+ + \delta^-) + \beta(H^+ + H^-) + 2p(2n+2)(1+\beta c^2).
    \end{align*}
    Finally, since $\lim_{n \to \infty}p(n)=0$, there exists $n$ large enough such that $2p(2n+2)(1+\beta c^2) \le \gamma$.
\end{proof}

\subsection*{Step 3: Completing the truncation boundary}
\label{sec:step-3-fill-gaps}

After truncating the policy to a linear policy, our next step consists of filling in the next layer of the linear boundary. Namely, there are a number of white cells around the truncation boundary which can be converted to red or blue hitting points at no cost (refer to the cells outlined in green in the left part of \Cref{fig:step-3-fill-gaps}). This is because any trajectory arriving at such a white cell goes on to stop at the corresponding color with probability 1.

\begin{figure}[H]
    \centering
    \begin{adjustbox}{center}
    \begin{tikzpicture}[baseline=(current bounding box.center)]
        \node[anchor=south west, inner sep=0] (img1) at (0,0) {
            \begin{minipage}{0.35\textwidth}
                \centering
                \includegraphics[width=\linewidth]{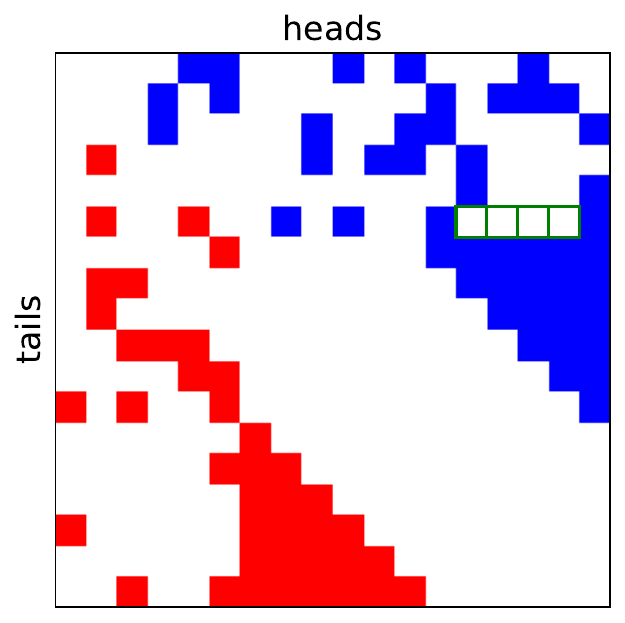}
            \end{minipage}
        };

        \node (arrow) at (7,3) {\Huge$\Longrightarrow$};

        \node[anchor=south west, inner sep=0] (img2) at (8,0) {
            \begin{minipage}{0.35\textwidth}
                \centering
                \includegraphics[width=\linewidth]{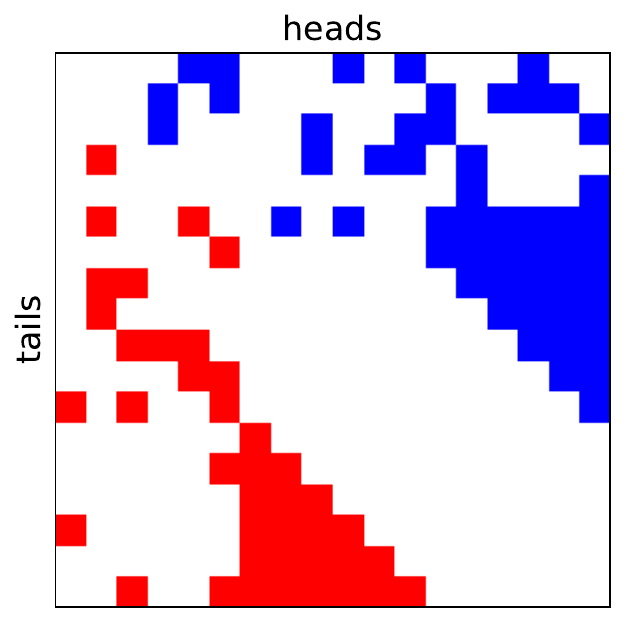}
            \end{minipage}
        };
    \end{tikzpicture}
    \end{adjustbox}
    \caption{Filling in gaps at the next layer of the truncation boundary. Observe that a trajectory arriving at any of the cells outlined in green will go on to hit a blue cell with probability 1.}
    \label{fig:step-3-fill-gaps}
\end{figure}

\begin{claim}[Filling the Next Layer]
    \label{claim:step-3-fill-gaps}
    Suppose that for $c \ge 1$, $\set{P}_{c,n+1}$ is a truncated linear policy with profile $(\delta_1, H_1)$.
    \begin{enumerate}
        \item[(1)] Color every white cell $(h,t)$ in $\set{P}_{c,n+1}$, where $t=n$ and $h > t+c$ blue.
        \item[(2)] Color every white cell $(h,t)$ in $\set{P}_{c,n+1}$, where $h=n$ and $t > h+c$ red. 
    \end{enumerate}
    In either case (1) or (2), let $(\delta_2, H_2)$ be the profile of the modified policy. We have that
    \begin{align*}
        H^+_2 &\le H^+_1, \quad H^-_2 \le H^-_1 \\
        \delta^+_2&=\delta^+_1, \qquad  \delta^-_2=\delta^-_1.
    \end{align*}
\end{claim}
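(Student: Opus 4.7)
My plan is to argue that any trajectory of $\mcP_{c,n+1}$ that passes through one of the cells targeted in case (1) must, even in the unmodified policy, eventually halt at a blue cell. Once this is established, recoloring those cells blue simply causes such trajectories to halt earlier at the same (blue) decision, which preserves the failure probabilities exactly and can only shorten the expected hitting times. The symmetric argument will handle case (2).

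For case (1), let $S = \{(h,n) : h > n+c \text{ and } (h,n) \text{ is white in } \mcP_{c,n+1}\}$ denote the cells being recolored. The crucial structural observation is that $\mcP_{c,n+1}$ is built from a policy to which Step 1 has already been applied, so by \Cref{claim:step-1-fix-colors} every cell $(h',n)$ with $h' > n$ is either blue or white, never red. Now fix any trajectory $\tau$ that enters $S$, and let $(h_1,n)$ be the first cell of $S$ it visits, so $h_1 \ge n+c+1$. In the unmodified $\mcP_{c,n+1}$, $\tau$ traverses some (possibly empty) sequence of cells in the row $t = n$, each with heads exceeding $n+c$ and hence blue or white, until it either halts at a blue cell in that row or takes its first downward step from some $(h',n)$ with $h' \ge h_1$ to $(h', n+1)$. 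In the latter case, $(h', n+1)$ lies in the linear region $h,t \ge n+1$ with signed difference $h' - (n+1) \ge c$, hence is a blue stopping cell of $\mcP_c$. Since a downward step occurs almost surely, $\tau$ halts blue in $\mcP_{c,n+1}$ with probability 1.

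In the modified policy, $\tau$ instead halts blue at $(h_1,n)$ itself. Since every trajectory entering $S$ halts blue in both policies and every other trajectory is unaffected, the probability of halting blue under each hypothesis is identical before and after, giving $\delta^+_2 = \delta^+_1$ and $\delta^-_2 = \delta^-_1$. The hitting time on affected trajectories can only decrease, so $H^+_2 \le H^+_1$ and $H^-_2 \le H^-_1$. Case (2) is handled by the identical argument with the roles of heads/tails and blue/red interchanged, using that Step 1 ensures cells $(n, t')$ with $t' > n$ are never blue, and that a rightward step from $(n, t')$ with $t' \ge n+c+1$ lands at $(n+1, t')$ in the linear region with $(n+1) - t' \le -c$, a red stopping cell.

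No serious obstacle arises; the main subtlety worth flagging is the appeal to \Cref{claim:step-1-fix-colors}. Absent the Step 1 recoloring, the trajectory from $(h_1,n)$ could potentially halt at a red cell further along the row $t = n$, in which case recoloring $(h_1,n)$ blue would transfer probability mass between $\delta^+$ and $\delta^-$, and the equality of failure probabilities would fail.
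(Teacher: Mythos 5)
Your proof is correct and takes essentially the same approach as the paper: observe that, thanks to the Step~1 recoloring (no red cells in the upper triangle) and the truncation (the row $t=n+1$ is blue for $h-t\ge c$), any trajectory entering one of the target white cells must eventually halt at a blue cell anyway, so recoloring those cells blue leaves the failure probabilities unchanged and can only shorten hitting times. The paper states this in two compressed sentences, while you unpack the trajectory-following argument explicitly and correctly flag the dependence on Step~1, but the underlying reasoning is identical.
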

\begin{proof}
    Consider case (1). For any white cell $(h,t)$ in $\set{P}_{c,n+1}$ that satisfies $t=n$ and $h > t+c$, it holds that any hitting point to its right at the same level is colored blue by the recoloring in Step 1. Furthermore, all points $(h', t+1)$ for $h' \ge h$ are also colored blue by the truncation at $n+1$. This implies that any trajectory arriving at this white cell will stop at a blue hitting point with probability 1. Thus, coloring this white cell blue does not affect any failure probabilities. Moreover, if there is a positive probability of arriving at this white cell, coloring the cell strictly decreases the expected hitting time. The analysis for case (2) is identical.
\end{proof}

\subsection*{Step 4: Extending the linear policy}
\label{label:step-4-place-hitting-point}

After we have filled in the gaps around the truncated linear policy, we are one step closer to extending the linear policy by another level. However, while hitting points at all $(h,t)$ where $t=n$ and $h > t+c$ have been colored blue, we must also ensure that there is a blue hitting point precisely at $(h,t)$ where $t=n, h=t+c$ (and red analogously). Our next technical lemma shows that placing precisely this hitting point can only result in improvement.

\begin{figure}[H]
    \centering
    \begin{adjustbox}{center}
    \begin{tikzpicture}[baseline=(current bounding box.center)]
        \node[anchor=south west, inner sep=0] (img1) at (0,0) {
            \begin{minipage}{0.35\textwidth}
                \centering
                \includegraphics[width=\linewidth]{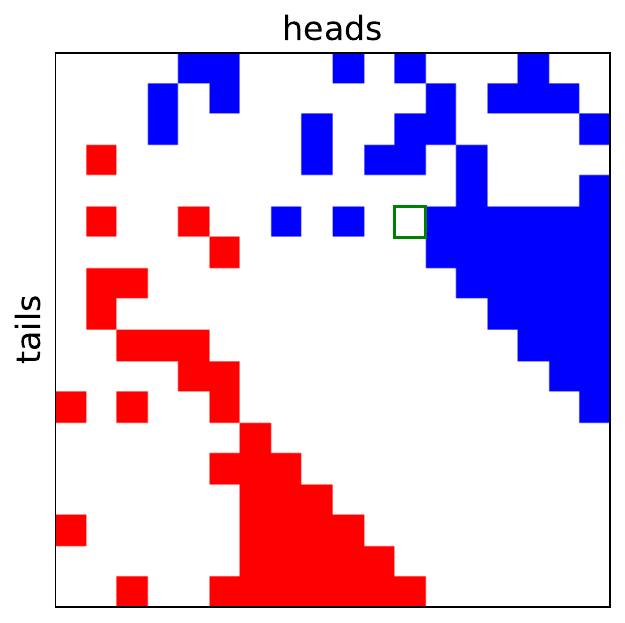}
            \end{minipage}
        };

        \node (arrow) at (7,3) {\Huge$\Longrightarrow$};

        \node[anchor=south west, inner sep=0] (img2) at (8,0) {
            \begin{minipage}{0.35\textwidth}
                \centering
                \includegraphics[width=\linewidth]{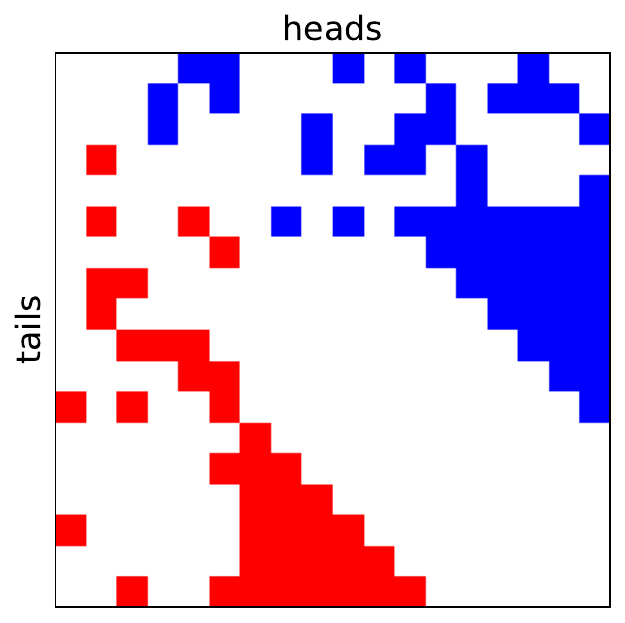}
            \end{minipage}
        };
    \end{tikzpicture}
    \end{adjustbox}
    \caption{Placing an additional hitting point at the right location (outlined green cell) extends the next level of the truncation.}
    \label{fig:step-4-place-hitting-point}
\end{figure}

\begin{lemma}[Placing the Next Hitting Point]
    \label{lemma:step-4-place-hitting-point}
    Suppose that for $c \ge 1$, $\set{P}_{c,n+1}$ is a truncated linear policy with profile $(\delta_1, H_1)$.
    \begin{enumerate}
        \item[(1)] Suppose that for $t=n$, there is a blue hitting point at every $h$ satisfying $h > t+c$.\footnote{That is, all gaps around the truncation have been filled, as described in the previous Step 3.} Now consider placing an additional blue hitting point at $h=t+c$, if there isn't already one. 
        \item[(2)] Suppose that for $h=n$, there is a red hitting point at every $t$ satisfying $t > h+c$. Now consider placing an additional red hitting point at $t=h+c$, if there isn't already one. 
    \end{enumerate}
    In either case (1) or (2), let $(\delta_2, H_2)$ be the profile of the modified policy. We have that
    \begin{equation}
        (\delta^+_2 + \delta^-_2) + \beta(H^+_2 + H^-_2) \le (\delta^+_1 + \delta^-_1) + \beta(H^+_1 + H^-_1),
    \end{equation}
    for any $\beta=\beta(c, \eps)$ satisfying
    \begin{equation}
        \label{eqn:beta-lb}
        \beta \ge \frac{2\eps\alpha^c(\alpha-1)}{(\alpha^{c+1}-1)(\alpha^c+1)+2c\alpha^c(\alpha-1)}.
    \end{equation}
    Here, $\alpha=\frac{1+2\eps}{1-2\eps}$. Note that the lower bound on $\beta$ is independent of $n$.
\end{lemma}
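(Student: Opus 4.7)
The plan is to isolate the trajectories whose fate is altered by placing the new hitting point, and then express the change in each component of the Bayes risk in closed form using the hitting-time formulas of \Cref{thm:linear-boundary}. I focus on case (1); case (2) follows by the $h \leftrightarrow t$ symmetry of the problem, since trajectories on the two sides of the main diagonal are disjoint.

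Let $P^\star = (n+c, n)$. Step~1 makes every hitting point with $h \ge t$ blue, Step~2 does not touch cells with $t < n+1$, and Step~3 does not touch any cell on the diagonal $h = t+c$; so if $P^\star$ is not already blue in $\set{P}_1$, it must be white. The only trajectories changed by the modification are therefore those that reach $P^\star$ without having stopped earlier, after which $\set{P}_2$ forces an immediate blue stop (correct under $H_1$, incorrect under $H_0$). Let $q^+$ and $q^-$ denote the probabilities of reaching $P^\star$ in $\set{P}_1$ under $H_1$ and $H_0$ respectively. Since each such trajectory has exactly $n+c$ heads and $n$ tails, the pointwise likelihood ratio yields $q^+/q^- = \alpha^c$, a relation that will collapse the $H_0$- and $H_1$-accounting into a single scalar inequality.

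In $\set{P}_1$, the walk leaves $P^\star$ with one additional toss. With probability $p$ it moves to $(n+c+1, n)$, a blue hitting point by the hypothesis of (1); with probability $1-p$ it moves to $(n+c, n+1)$, which lies in the truncation region at $h-t = c-1$. From there, since heads and tails only increase, the walk stays in the truncation region and behaves exactly as in $\set{P}_c$, so its hitting behaviour is governed by \Cref{thm:linear-boundary} with $a = 2c-1$ and $b = 1$. Applying that theorem with $r = \alpha$ under $H_0$ and $r = 1/\alpha$ under $H_1$ yields closed forms for the wrong-boundary probabilities $P^+_{\mathrm{red}}, P^-_{\mathrm{red}}$ and the continuation times $\E[T_{H_1}], \E[T_{H_0}]$. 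Subtracting ``$\set{P}_2$'' from ``$\set{P}_1$'' and weighting by the reaching probabilities gives
\begin{align*}
\Delta \delta^+ &= -q^+\,(1/2-\eps)\,P^+_{\mathrm{red}},
& \Delta H^+ &= -q^+\bigl[\,1 + (1/2-\eps)\,\E[T_{H_1}]\,\bigr], \\
\Delta \delta^- &= +q^-\,(1/2+\eps)\,P^-_{\mathrm{red}},
& \Delta H^- &= -q^-\bigl[\,1 + (1/2+\eps)\,\E[T_{H_0}]\,\bigr].
\end{align*}
Dividing the desired inequality $\Delta \delta^+ + \Delta \delta^- + \beta(\Delta H^+ + \Delta H^-) \le 0$ by $q^-$ and substituting $q^+ = \alpha^c q^-$ reduces it to a single scalar inequality of the form $L(c,\eps) \le \beta\,B(c,\eps)$.

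The rest is algebra. Using the identities $(1/2-\eps)(\alpha-1) = 2\eps$, $(1/2+\eps)(\alpha-1) = 2\eps\,\alpha$, and $\alpha^{2c}-1 = (\alpha^c-1)(\alpha^c+1)$, the quantities from \Cref{thm:linear-boundary} collapse to $L = 2\eps\,\alpha^c/(\alpha^c+1)$ and $B = (\alpha^{c+1}-1)/(\alpha-1) + 2c\,\alpha^c/(\alpha^c+1)$. Placing $B$ over the common denominator $(\alpha-1)(\alpha^c+1)$ and solving $\beta \ge L/B$ reproduces exactly \eqref{eqn:beta-lb}; the $n$-dependence drops out because the continuation from $P^\star$ always faces the same linear-boundary parameters. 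The main obstacle is precisely this bookkeeping step: \Cref{thm:linear-boundary} produces four fractions all with denominator $\alpha^{2c}-1$, and one must cancel these consistently via the identities above in order to recognize the compact form of \eqref{eqn:beta-lb}. No further conceptual ingredient is required.
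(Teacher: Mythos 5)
Your proposal is correct and follows essentially the same route as the paper: isolate the single white cell $(n+c,n)$, account for only the trajectories that reach it, observe that after one compulsory tails step the walk sits at distance $1$ and $2c-1$ from a pair of linear boundaries so \Cref{thm:linear-boundary} applies, use $q^+=\alpha^c q^-$ to collapse the two hypotheses, and solve the resulting scalar inequality for $\beta$. The only cosmetic difference is that the paper computes $\Delta\delta^+ + \Delta\delta^-$ and $\Delta H^+ + \Delta H^-$ separately and then takes their ratio, whereas you write the inequality directly; the intermediate quantities and the final bound \eqref{eqn:beta-lb} agree.
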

\begin{proof}
    For concreteness, we consider case (1); the analysis for case (2) is identical due to symmetry. We suppose that the policy has undergone Steps 1-3. If the point $x:=(t+c, t)=(n+c, n)$ is already blue, we are done. Otherwise, $x$ was a white cell, and we recolor it blue. 
    
    Observe that trajectories which do not intersect $x$ are unaffected. Before coloring $x$ blue, any trajectory that arrived at $x$ could go on to hit either the $\half+\eps$ or the $\half-\eps$ boundary. After making $x$ a blue hitting point, this trajectory halts and declares $\half+\eps$. By making it more likely for a trajectory to halt at a blue hitting point and declare $\+$, we \emph{decrease} the failure probability $\delta^+$; conversely, we \textit{increase} the failure probability $\delta^-$ if the coin were biased $\half-\eps$.

    Consider a trajectory $\tau$ that arrives at $x$ but then goes on to hit the $\half-\eps$ boundary in the original policy $\P_{c, n+1}$. Such a trajectory must necessarily flip tails next (otherwise, it hits the truncated linear boundary). At this point $(n+c, n+1)$, note that $\tau$ is at distance 1 and $2c-1$ from a pair of \textit{linear} boundaries!  %
    \begingroup
    \allowdisplaybreaks
    \begin{align*}
        \Delta \delta^- = \delta^-_2 - \delta^-_1 &= \Pr_{\half-\eps}\left[\text{coin arrives at }x\right]\cdot \left( \half+\eps\right) \cdot \Pr_{\half-\eps}\left[\text{down $2c-1$ tails before up $1$ head}\right] \\
        \Delta \delta^+ = \delta^+_2 - \delta^+_1 &= -\Pr_{\half+\eps}\left[\text{coin arrives at }x\right]\cdot \left( \half-\eps\right) \cdot \Pr_{\half+\eps}\left[\text{down $2c-1$ tails before up $1$ head}\right] \\
        &= -\alpha^c\cdot\Pr_{\half-\eps}\left[\text{coin arrives at }x\right]\cdot \left(\half-\eps\right) \cdot \Pr_{\half+\eps}\left[\text{down $2c-1$ tails before up $1$ head}\right].
    \end{align*}
    The last step follows from a simple algebraic transformation:
    \begin{align}
        \Pr_{\half-\eps}[\text{coin arrives at }x] &= \sum_{\text{trajectory $\tau$ arriving at $x$}}\left(\half-\eps\right)^{t+c}\left(\half+\eps\right)^{t} \nonumber \\
        &= \left(\frac{\half-\eps}{\half+\eps}\right)^{c}\sum_{\text{trajectory $\tau$ arriving at $x$}}\left(\half+\eps\right)^{t+c}\left(\half-\eps\right)^{t} \nonumber \\
        &= \alpha^{-c} \cdot \Pr_{\half+\eps}[\text{coin arrives at }x]. \label{eqn:calc-relating-plus-minus}
    \end{align}
    In the above, the event ``down $a$ tails before up $b$ heads'' corresponds precisely to the event that a trajectory arrives at a point $(h,t)$ satisfying $h-t=-a$ before it arrives at any point $(h,t)$ satisfying $h-t=b$. Therefore, we can directly apply the bounds from \Cref{thm:linear-boundary} to obtain:
    \begin{align*}
        \Delta \delta^+ + \Delta \delta^- &= \Pr_{\half-\eps}\left[\text{coin arrives at }x\right] \cdot \left[ \left( \half + \eps\right)\left(\frac{\alpha^{2c}-\alpha^{2c-1}}{\alpha^{2c}-1}\right) -\left(\half-\eps\right)\alpha^c\left(\frac{\alpha-1}{\alpha^{2c}-1}\right)\right] \\
        &=  \Pr_{\half-\eps}\left[\text{coin arrives at }x\right] \cdot \left(\frac{\alpha-1}{\alpha^{2c}-1}\right) \left[\left( \half + \eps\right)\alpha^{2c-1} -\left(\half-\eps\right)\alpha^c\right] > 0.
    \end{align*}
    \endgroup
    The expected hitting time decreases in both cases, as
    \begingroup
    \allowdisplaybreaks
    \begin{align*}
        \Delta H^{-} &= H^-_2-H^-_1 \\
        &= -\Pr_{\half-\eps}\left[\text{coin arrives at }x\right]\left[\left(\half-\eps\right) + \left(\half+\eps\right)\left(1+\E_{\half-\eps}\left[\text{down $2c-1$ tails or up $1$ head}\right]\right)\right] \\
        &= -\Pr_{\half-\eps}\left[\text{coin arrives at }x\right] \left[1 + \left(\half+\eps\right)\E_{\half-\eps}\left[\text{down $2c-1$ tails or up $1$ head}\right]\right] \\
        \Delta H^{+} &= H^+_2-H^+_1 \\
        &= -\Pr_{\half+\eps}\left[\text{coin arrives at }x\right] \left[\left(\half+\eps\right) + \left(\half-\eps\right)\left(1+\E_{\half+\eps}\left[\text{down $2c-1$ tails or up $1$ head}\right]\right)\right] \\
        &= -\alpha^c\cdot\Pr_{\half-\eps}\left[\text{coin arrives at }x\right] \left[\left(\half+\eps\right) + \left(\half-\eps\right)\left(1+\E_{\half+\eps}\left[\text{down $2c-1$ tails or up $1$ head}\right]\right)\right] \\
        &= -\alpha^c\cdot\Pr_{\half-\eps}\left[\text{coin arrives at }x\right] \left[1 + \left(\half-\eps\right)\E_{\half+\eps}\left[\text{down $2c-1$ tails or up $1$ head}\right]\right].
    \end{align*}
    \endgroup

    Plugging in the expressions from \Cref{eqn:linear-policy-hitting-time}, this gives
    \begingroup
    \allowdisplaybreaks
    \begin{align*}
        \Delta H^{+} + \Delta H^{-} &= -\Pr_{\half-\eps}\left[\text{coin arrives at }x\right] \\ 
        &\cdot \left[1 + \frac{(2\eps+1)(\alpha-\alpha^{2c+1}+2(\alpha-1)a^{2c}c)}{4\eps(\alpha^{2c+1}-\alpha)} + \alpha^c\left(1-\frac{(2\eps-1)(\alpha^{2c}-2\alpha c+2c - 1)}{4\eps(\alpha^{2c}-1)}\right)\right] \\
        &= -\Pr_{\half-\eps}\left[\text{coin arrives at }x\right]
        \left( \frac{\alpha^{2c+1}+(\alpha-1)(2c+1)\alpha^c - 1}{(\alpha-1)(\alpha^c+1)}\right) \le 0.
    \end{align*}
    \endgroup

    Now, consider first the case that $\Pr_{\half-\eps}\left[\text{coin arrives at }x\right]=0$, meaning there is no trajectory that can arrive at $x$. In this case, $\Delta \delta^{+} + \Delta \delta^{-} = \Delta H^+ + \Delta H^{-} = 0$, and so, the conclusion is true.

    Otherwise, when we take the ratio $\frac{\Delta \delta^{+} + \Delta \delta^{-}}{\Delta H^{+} + \Delta H^{-}}$, observe that the common factor $\Pr_{\half-\eps}\left[\text{coin arrives at }x\right]$ cancels out, and we obtain
    \begingroup
    \allowdisplaybreaks
    \begin{align*}
        \frac{\Delta \delta^+ + \Delta \delta^-}{\Delta H^+ + \Delta H^{-}} &= (\alpha-1)(\alpha^c+1)\frac{\left(\half-\eps\right)\alpha^c\left(\frac{\alpha-1}{\alpha^{2c}-1}\right)-\left( \half + \eps\right)\left(\frac{\alpha^{2c}-\alpha^{2c-1}}{\alpha^{2c}-1}\right)}{\alpha^{2c+1}+(\alpha-1)(2c+1)\alpha^c - 1} \\
        &= \frac{2\eps\alpha^c(1-\alpha)}{(\alpha^{c+1}-1)(\alpha^c+1)+2c\alpha^c(\alpha-1)}.
    \end{align*}
    \endgroup
    For the action to be an improvement, we would want
    \begingroup
    \allowdisplaybreaks
    \begin{align*}
        &\beta \cdot (\Delta H^{+} + \Delta H^{-}) \le -(\Delta \delta^{+} + \Delta \delta^{-}) \\
        \implies \qquad & \beta \ge -\frac{\Delta \delta^{+} + \Delta \delta^{-}}{\Delta H^{+} + \Delta H^{-}} \qquad (\text{since $\Delta H^{+} + \Delta H^{-} \le 0$}) \\
        &= \frac{2\eps\alpha^c(\alpha-1)}{(\alpha^{c+1}-1)(\alpha^c+1)+2c\alpha^c(\alpha-1)},
    \end{align*}
    \endgroup
    which is the claimed bound on $\beta$. The analysis is symmetric for case (2), concluding the proof.
\end{proof}

\subsection*{Step 5: Erasing extraneous hitting points}
\label{sec:step-5-erase-extra-points}

All that remains at this point to complete the truncation to a linear policy at $t=n$ is to remove any extraneous hitting points at this level. That is, we recolor all points between $(n, n)$ and $(n+c-1, n)$ (including $(n,n)$) white (these are the points to the left of the threshold placed in Step 4). Note that by Step 1, all such points are guaranteed to be blue. %
Similarly, we recolor all red points between $(n, n)$ and $(n, n+c-1)$ white. %

We will erase extraneous blue hitting points one-by-one in decreasing order of their $h$ values. That is, we erase hitting points one at a time as we move left from the point $(n+c,n)$ toward (and including) the point $(n,n)$ (see \Cref{fig:step-5-erase-extra-points}). Similarly, if we are working in the lower triangle, we erase hitting points one at a time as we move up from the point $(n, n+c)$ toward the diagonal point $(n,n)$. We will show that each such erasure results in a policy that is no worse. After all extraneous points are thus erased (in both the upper and lower triangles) we will notably have extended the truncated policy $\set{P}_{c, n+1}$ to $\set{P}_{c, n}$! 

\begin{figure}[H]
    \centering
    \begin{adjustbox}{center}
    \begin{tikzpicture}[baseline=(current bounding box.center)]
        \node[anchor=south west, inner sep=0] (img1) at (0,0) {
            \begin{minipage}{0.3\textwidth}
                \centering
                \includegraphics[width=\linewidth]{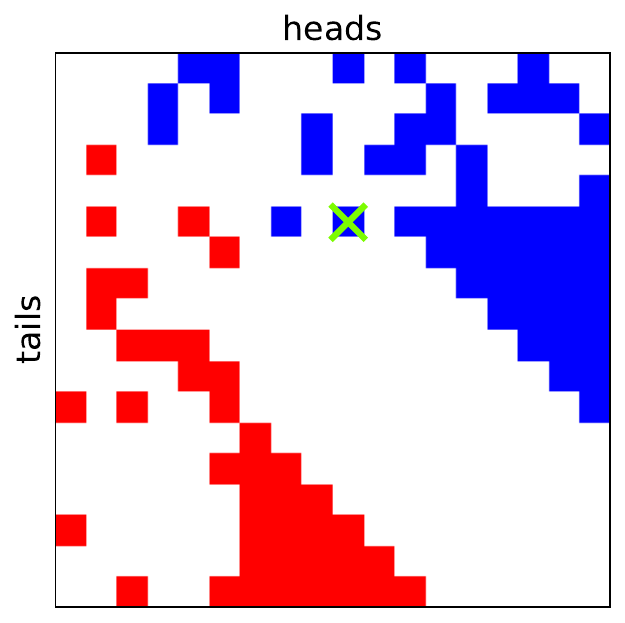}
            \end{minipage}
        };

        \node (arrow) at (5.5,2.5) {\Huge$\Rightarrow$};

        \node[anchor=south west, inner sep=0] (img2) at (6,0) {
            \begin{minipage}{0.3\textwidth}
                \centering
                \includegraphics[width=\linewidth]{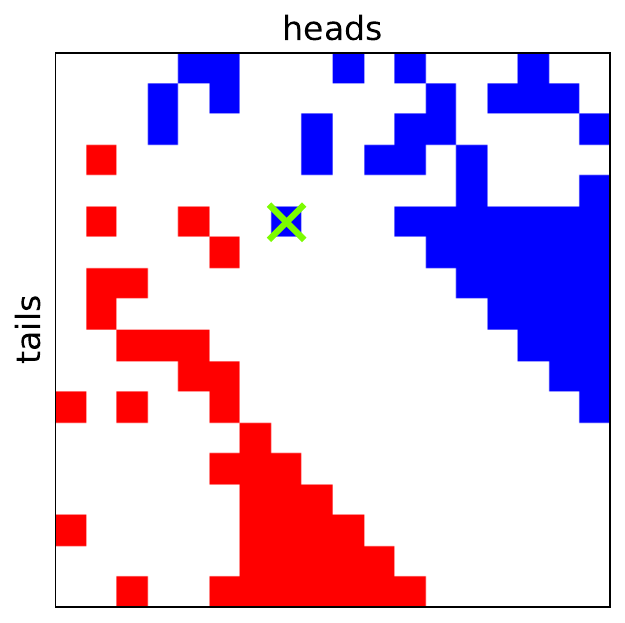}
            \end{minipage}
        };

        \node (arrow) at (11.5,2.5) {\Huge$\Rightarrow$};

        \node[anchor=south west, inner sep=0] (img2) at (12,0) {
            \begin{minipage}{0.3\textwidth}
                \centering
                \includegraphics[width=\linewidth]{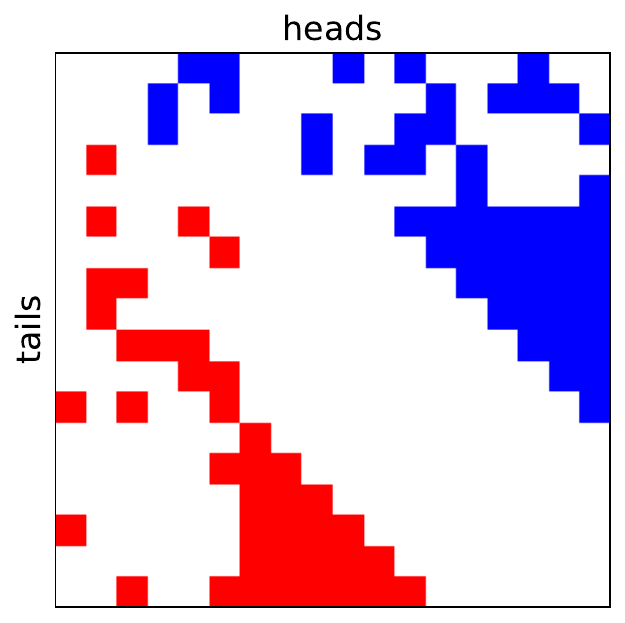}
            \end{minipage}
        };
    \end{tikzpicture}
    \end{adjustbox}
    \caption{Erasing extraneous points to the left of the linear policy threshold. Each erasure can only cause improvement.}
    \label{fig:step-5-erase-extra-points}
\end{figure}

\begin{lemma}[Erase Extra Hitting Points]
    \label{lemma:step-5-erase-extra-points}
    Suppose that for $c \ge 1$, $\set{P}_{c,n+1}$ is a truncated linear policy with profile $(\delta_1, H_1)$. 
    \begin{enumerate}
        \item[(1)] Suppose that for $t=n$, at every $h$ satisfying $h \ge t+c$, there is a blue hitting point.\footnote{In other words, Step 4 has been completed.} Among hitting points $(h,t)$ where $n \le h < n+c$, let $(h',t)$ be the one with the largest $h$ value. Now consider erasing $(h',t)$, i.e., coloring it white. %
        \item[(2)] Suppose that for $h=n$, at every $t$ satisfying $t \ge h+c$, there is a red hitting point. Among hitting points $(h,t)$ where $n < t < n+c$, let $(h,t')$ be the one with the largest $t$ value. Now consider erasing $(h,t')$, i.e., coloring it white. %
    \end{enumerate}
    In either case (1) or (2), let $(\delta_2, H_2)$ be the profile of the modified policy. We have that
    \begin{equation}
        (\delta^+_2 + \delta^-_2) + \beta(H^+_2 + H^-_2) \le (\delta^+_1 + \delta^-_1) + \beta(H^+_1 + H^-_1),
    \end{equation}
    for any $\beta=\beta(c, \eps)$ satisfying
    \begin{equation}
        \label{eqn:beta-ub}
         \beta \le \frac{2\eps\alpha^c(\alpha-1)}{(\alpha^c-\alpha)(\alpha^c+1)+2c\alpha^c(\alpha-1)}.
    \end{equation}
    Here, $\alpha=\frac{1+2\eps}{1-2\eps}$. Note that the upper bound on $\beta$ is independent of $n$. %
\end{lemma}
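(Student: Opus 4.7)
My plan is to closely mirror the argument of \Cref{lemma:step-4-place-hitting-point}; I focus on case (1), as case (2) follows symmetrically by swapping heads and tails. Set $d := h' - n \in \{0, 1, \ldots, c-1\}$. The only trajectories affected by the erasure are those that reach the cell $(h', n)$: previously they halted and predicted $\+$, and now they continue. The key geometric observation is that the continuation from $(h', n)$ reduces to a one-dimensional biased random walk on $Z := h - t$ between a pair of parallel-line absorbing barriers. Indeed, under the assumption that $(h', n)$ is the rightmost remaining hitting point in $[n, n+c-1]$: $(n+c, n)$ and every $(h, n)$ with $h > n+c$ are blue (by Steps 3--4); the cells $(h, n)$ with $h' < h < n+c$ are white; and for $t \ge n+1$ the truncated linear policy places blue on $h - t = c$ and red on $h - t = -c$. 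Because the walk can only increase $h$ or $t$, from $(h', n)$ the variable $Z$ starts at $d$ and is absorbed at either $Z = c$ (blue, predicting $\+$) or $Z = -c$ (red, predicting $\-$). \Cref{thm:linear-boundary} therefore applies directly with $b = c - d$ and $a = c + d$.

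From here, I would follow the computation in Step 4 nearly verbatim. Let $q := \Pr_{\half - \eps}[\text{reach }(h', n)]$; the same identity as \eqref{eqn:calc-relating-plus-minus} gives $\Pr_{\half + \eps}[\text{reach }(h', n)] = \alpha^d q$. Substituting the absorption probabilities and expected hitting times from \Cref{thm:linear-boundary} and simplifying, one obtains expressions of the form $\Delta \delta^+ + \Delta \delta^- = q \cdot f(\alpha, c, d) \le 0$ and $\Delta H^+ + \Delta H^- = q \cdot g(\alpha, c, d) \ge 0$, with the degenerate case $q = 0$ handled trivially. The desired Bayes risk inequality then reduces to $\beta \le R(d) := -f(\alpha, c, d)/g(\alpha, c, d)$, independent of $q$.

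The main technical obstacle is showing that the lemma's claimed upper bound \eqref{eqn:beta-ub} equals $\min_{d \in \{0, \ldots, c-1\}} R(d)$. I expect this minimum to be attained at the boundary case $d = c - 1$, corresponding to erasing the point adjacent to the linear boundary $h - t = c$, where the failure-probability benefit from erasure is smallest relative to the hitting-time cost. A direct algebraic expansion at $d = c - 1$ should then reproduce exactly the right-hand side of \eqref{eqn:beta-ub}. The remaining nonroutine step is to verify the monotonicity $R(0) \ge R(1) \ge \cdots \ge R(c-1)$, which I expect to be accessible via polynomial manipulation in $\alpha$. Once this is done, any $\beta$ satisfying \eqref{eqn:beta-ub} satisfies $\beta \le R(d)$ for every relevant $d$, yielding the claimed Bayes risk inequality.
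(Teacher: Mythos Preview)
Your proposal is correct and follows essentially the same route as the paper's proof: the paper likewise reduces the continuation from the erased point to a one-dimensional gambler's ruin between the barriers $Z=\pm c$, applies \Cref{thm:linear-boundary} with $a=c+c'$, $b=c-c'$ (your $c'=d$), forms the ratio $g(c')=(\Delta\delta^++\Delta\delta^-)/(\Delta H^++\Delta H^-)$, and shows via a derivative computation (deferred to \Cref{claim:derivative-calculation-erase}) that $g$ is increasing in $c'$, so that the binding constraint is at $c'=c-1$, which yields exactly \eqref{eqn:beta-ub}. The only point worth noting is that the monotonicity step the paper carries out is a somewhat delicate calculus argument treating $c'$ as a continuous variable rather than pure polynomial manipulation, but your plan anticipates exactly this obstacle.
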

\begin{proof}
    Again, for concreteness, we consider case (1); the analysis for case (2) is identical due to symmetry. We assume that there exists some hitting point $x:=(h',t)$ to erase, or else we are done. Let $h'-t=c'$, where $0 \le c' < c$.

    The first main observation is that after erasing $x$, all hitting trajectories that do not pass through $x$ remain unaffected. Before erasing $x$, any trajectory arriving at $x$ immediately halts and declares $\half+\eps$.\footnote{Recall that we chose to color the diagonal points blue as well.} %
    After the move, this trajectory can now go on to hit either the $\half+\eps$ boundary, or the $\half-\eps$ boundary, making it more likely for an arbitrary trajectory to hit the $\half-\eps$ boundary. If the coin were biased $\half-\eps$, this trajectory \textit{decreases} the failure probability, whereas if the coin were biased $\half+\eps$, this trajectory now \textit{increases} the failure probability. The expected hitting time, however, \textit{increases} in both cases. 
    
    The second main observation is that after reaching $x$, all future hitting points lie on a linear policy. This is because the policy $\set{P}_{c,n+1}$ we are considering is a truncated linear policy. Hence, given that a trajectory arrives at $x$, we can analyze whether it hits the $\frac{1}{2} + \eps$ or $\frac{1}{2} - \eps$ boundary first by plugging in the formulae from \Cref{thm:linear-boundary} again.
    \begingroup
    \allowdisplaybreaks
    \begin{align*}
        \Delta \delta^{-} = \delta^-_2-\delta^-_1  &= -\Pr_{\half-\eps}\left[\text{coin arrives at }x\right]\cdot \Pr_{\half-\eps}\left[\text{down $c+c'$ tails before up $c-c'$ heads}\right] \\
        &= -\alpha^{-c'}\cdot\Pr_{\half+\eps}\left[\text{coin arrives at }x\right]\cdot \Pr_{\half-\eps}\left[\text{down $c+c'$ tails before up $c-c'$ heads}\right]\\
        \Delta \delta^{+} = \delta^+_2-\delta^+_1 &= +\Pr_{\half+\eps}\left[\text{coin arrives at }x\right]\cdot \Pr_{\half+\eps}\left[\text{down $c+c'$ tails before up $c-c'$ heads}\right] \\
        \Delta \delta^{+} + \Delta \delta^{-} &= \Pr_{\half + \eps}\left[\text{coin arrives at }x\right]\cdot\left[\left(\frac{\alpha^{c-c'}-1}{\alpha^{2c}-1}\right) -\alpha^{-c'}\left(\frac{\alpha^{2c}-\alpha^{c+c'}}{\alpha^{2c}-1}\right)\right] \\
        &= \Pr_{\half+\eps}\left[\text{coin arrives at }x\right]\cdot\frac{(1-\alpha^{c-c'})(\alpha^c-1)}{(\alpha^{2c}-1)} \\
        &= \Pr_{\half+\eps}\left[\text{coin arrives at }x\right]\cdot\frac{1-\alpha^{c-c'}}{\alpha^{c}+1} \le 0.
    \end{align*}
    \endgroup
    In the above, we used a calculation similar to \eqref{eqn:calc-relating-plus-minus} relating $\Pr_{\half+\eps}[\cdot]$ and $\Pr_{\half-\eps}[\cdot]$.
    The change in the expected hitting times is
    \begingroup
    \allowdisplaybreaks
    \begin{align*}
        \Delta H^{-} = H^-_2-H^-_1
        &= \Pr_{\half-\eps}\left[\text{coin arrives at }x\right]\cdot \E_{\half-\eps}\left[\text{down $c+c'$ tails or up $c-c'$ heads}\right] \\
        &= \alpha^{-c'}\cdot\Pr_{\half+\eps}\left[\text{coin arrives at }x\right] \cdot \E_{\half-\eps}\left[\text{down $c+c'$ tails or up $c-c'$ heads}\right] \\
        \Delta H^{+} = H^+_2-H^+_1
        &= \Pr_{\half+\eps}\left[\text{coin arrives at }x\right]\cdot \E_{\half+\eps}\left[\text{down $c+c'$ tails or up $c-c'$ heads}\right] \\
        \Delta H^{+} + \Delta H^{-} 
        &= \Pr_{\half+\eps}\left[\text{coin arrives at }x\right]\cdot \frac{a^{-c'}\left(c(\alpha^c-1)(\alpha^{c'}+1)-c'(a^c+1)(a^{c'}-1)\right)}{2\eps(\alpha^c+1)}.
    \end{align*}
    \endgroup
    Consider first the case that $\Pr_{\half+\eps}\left[\text{coin arrives at }x\right]=0$, meaning there is no trajectory that can arrive at $x$. In this case, $\Delta \delta^{+} + \Delta \delta^{-} = \Delta H^+ + \Delta H^{-} = 0$, and so, the conclusion is true.

    Otherwise, if we take the ratio $\frac{\Delta \delta^{+} + \Delta \delta^{-}}{\Delta H^{+} + \Delta H^{-}}$, the common factor $\Pr_{\half+\eps}\left[\text{coin arrives at }x\right]$ cancels out, %
    yielding
    \begingroup
    \allowdisplaybreaks
    \begin{align}
        \frac{\Delta \delta^{+} + \Delta \delta^{-}}{\Delta H^{+} + \Delta H^{-}} 
        &= \frac{2\eps(1-\alpha^c)(\alpha^c-\alpha^{c'})}{(c+c')(\alpha^{2c}+\alpha^{c'})+(c-c')(\alpha^{2c+c'}+1)-2c\alpha^c(1+\alpha^{c'})} \nonumber \\
        &= \frac{2\eps(\alpha^c-\alpha^{c'})}{c'(\alpha^c+1)(\alpha^{c'}-1)-c(\alpha^c-1)(\alpha^{c'}+1)} := g(c'), \label{eqn:g-definition}
    \end{align}
    \endgroup
    where $g(c')$ is a function of the point $x$ we are attempting to erase. At this point, we require the following claim, whose proof is a tedious but elementary calculation, and is deferred to \Cref{sec:proof-derivative-calculation}.

    \begin{restatable}{claim}{claimderivative}
        \label{claim:derivative-calculation-erase}
        For the function $g(c')$ defined in \eqref{eqn:g-definition}, it is the case that $\frac{dg(c')}{dc'} > 0$.
    \end{restatable}
    
    \Cref{claim:derivative-calculation-erase} implies that $g(c')$ increases as $c'$ increases from $0$ to $c-1$. Furthermore, notice that $g(c')=\frac{\Delta \delta^{+} + \Delta \delta^{-}}{\Delta H^{+} + \Delta H^{-}} < 0$ for all $c'$, and hence it is largest when $c'=c-1$. 
    
    Now, for the action of erasing $x$ to be an improvement, we want
    \begin{align*}
        &\beta \cdot (\Delta H^{+} + \Delta H^{-}) \le -(\Delta \delta^{+} + \Delta \delta^{-}) \\
        \implies \qquad & \beta \le -\frac{\Delta \delta^{+} + \Delta \delta^{-}}{\Delta H^{+} + \Delta H^{-}} \qquad (\text{since $\Delta H^{+} + \Delta H^{-} \ge 0$}).
    \end{align*}
    By the preceding argument about the monotonicity of $g(\cdot)$, it is sufficient to choose
    \begin{align*}
    \beta \le -g(c-1) = \frac{2\eps\alpha^c(\alpha-1)}{(\alpha^c-\alpha)(\alpha^c+1)+2c\alpha^c(\alpha-1)},
    \end{align*}
    which is the upper bound claimed in \eqref{eqn:beta-ub}. By symmetry, the analysis is identical for case (2), concluding the proof.

\end{proof}

\begin{remark}[$\beta$ Ranges are Contiguous]
    \label{remark:beta-values-range}
    Let $l_c$ and $u_c$ denote the lower bound \eqref{eqn:beta-lb} and upper bound \eqref{eqn:beta-ub} on $\beta=\beta(c,\eps)$ respectively for a given value of $c, \eps$. Then, we can verify that $l_c < u_c$ for every $c \ge 1$, and that $u_1=\eps$. It is also the case that $l_c = u_{c+1}$ for every $c \ge 1$, meaning that the intervals $[l_c, u_c]$ are contiguous segments that span the interval $(0, \eps]$, where each pair $[l_c, u_c]$ and $[l_{c+1}, u_{c+1}]$ shares precisely one point, namely $u_{c+1}$.
\end{remark}

\subsection*{Putting everything together}
\label{sec:step-6-put-together}

We are now ready to complete the proof of our main \Cref{thm:linear-policy-beta}. The strategy is the following: after completing Steps 1 and 2 above, we are left with a truncated linear policy $\mcP_{c, n+1}$, whose Bayes risk at most $\gamma$ larger than the Bayes risk of the original policy $\mcP$. Thereafter, we run Steps 3, 4, 5 in loop, iteratively transforming $\mcP_{c, n+1}$ to $\mcP_{c, n}, \mcP_{c, n-1},\dots, \mcP_{c,0}$, while ensuring throughout that the Bayes risk does not increase (see \Cref{fig:step-6-put-together}). Since $\mcP_{c,0}$ is in fact the target linear policy $\mcP_c$, we will have shown that the Bayes risk of $\mcP_c$ is at most $\gamma$ more than that of $\mcP$. Since we can choose $\gamma$ to be arbitrarily small (which decides the truncation level $n$ to start with), we can conclude that the Bayes risk of $\mcP_c$ is at most the Bayes risk of $\mcP$.

\begin{figure}[H]
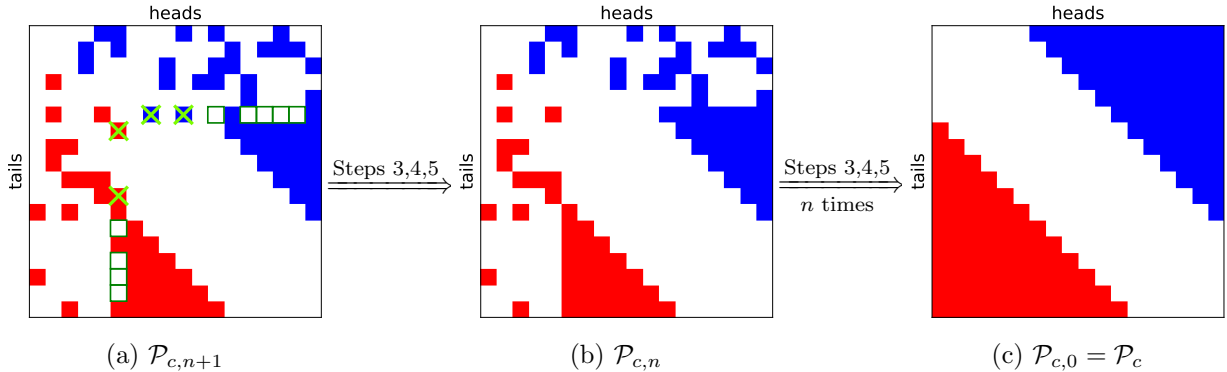

    \centering
    \begin{adjustbox}{center}
    \begin{tikzpicture}[baseline=(current bounding box.center)]
        \node[anchor=south west, inner sep=0] (img1) at (0,0) {
            \begin{minipage}{0.265\textwidth}
                \centering
                \includegraphics[width=\linewidth]{img/policy-linearized-all-moves-highlighted.pdf}
                \subcaption{$\mcP_{c, n+1}$}
            \end{minipage}
        };

        \node (arrow) at (5.2,2.5) {$\xRightarrow[\text{}]{\text{Steps 3,4,5}}$};

        \node[anchor=south west, inner sep=0] (img2) at (6,0) {
            \begin{minipage}{0.265\textwidth}
                \centering
                \includegraphics[width=\linewidth]{img/policy-linearized-erased-2-both-triangles.pdf}
                \subcaption{$\mcP_{c,n}$}
            \end{minipage}
        };

        \node (arrow) at (11.2,2.5) {$\xRightarrow[\text{$n$ times}]{\text{Steps 3,4,5}}$};

        \node[anchor=south west, inner sep=0] (img2) at (12,0) {
            \begin{minipage}{0.265\textwidth}
                \centering
                \includegraphics[width=\linewidth]{img/policy-fully-linearized.pdf}
                \subcaption{$\mcP_{c,0}=\mcP_{c}$}
            \end{minipage}
        };
    \end{tikzpicture}
    \end{adjustbox}
    \caption{Converting the truncated linear policy $\mcP_{c,n+1}$ step-by-step to the target linear policy $\mcP_c$ by repeating Steps 3,4,5.}
    \label{fig:step-6-put-together}
\end{figure}

We restate \Cref{thm:linear-policy-beta} here for convenience.

\theoremlinearpolicyoptimalitybeta*
\begin{proof}
    Let $\set{P}$ be any policy with profile $(\delta, H)$. We consider two cases:
    \paragraph{Case 1:} $\beta \le \eps$.\\
    \noindent We first perform Step 1 on $\mcP$. Then, we consider the linear policy $\set{P}_c$ corresponding to a $c  \in \mathbb{Z}_{\ge 1}$ satisfying
    \begin{align}
        \frac{2\eps\alpha^c(\alpha-1)}{(\alpha^{c+1}-1)(\alpha^c+1)+2c\alpha^c(\alpha-1)} \le \beta \le \frac{2\eps\alpha^c(\alpha-1)}{(\alpha^c-\alpha)(\alpha^c+1)+2c\alpha^c(\alpha-1)}. \label{eqn:beta-sandwich}
    \end{align}
    Here, the upper bound is from \eqref{eqn:beta-ub} and the lower bound is from \eqref{eqn:beta-lb}. Since $\beta \le \eps$, we know from \Cref{remark:beta-values-range} that the choice of $c$ above is well-defined. 
    For any $\gamma > 0$, \Cref{claim:step-2-truncation-to-linear} then guarantees the existence of a \textit{truncated} linear policy $\set{P}_{c,n+1}$ for a large enough (but finite) $n$, having profile $(\delta_{c, n+1}, H_{c, n+1})$, such that
    \begin{align*}
        (\delta^+_{c,n+1} + \delta^-_{c,n+1}) + \beta(H^+_{c,n+1} + H^-_{c,n+1}) \le (\delta^+ + \delta^-) + \beta(H^+ + H^-) + \gamma.
    \end{align*}
    We next perform Step 2, by truncating $\mcP$ to such a $\mcP_{c, n+1}$. Our strategy now is to run Steps 3, 4, 5 in loop starting with $\set{P}_{c,n+1}$, so that we end up with the linear policy $\set{P}_{c}$. In more detail, we first perform Steps 3, 4, 5 in both the upper and lower triangles of $\set{P}_{c, n+1}$. Note that at this point, we will have converted the policy $\set{P}_{c,n+1}$ that we started with to the policy $\set{P}_{c,n}$. We can then repeat this process till we end up with $\set{P}_{c,0}=\set{P}_c$. Throughout the process, we will have ensured that the Bayes risk does not increase. Namely, chaining invocations of \Cref{claim:step-1-fix-colors}, \Cref{claim:step-2-truncation-to-linear}, \Cref{claim:step-3-fill-gaps}, \Cref{lemma:step-4-place-hitting-point} and \Cref{lemma:step-5-erase-extra-points}, we have that
    \begin{align*}
        (\delta^+_{c} + \delta^-_{c}) + \beta(H^+_{c} + H^-_{c}) \le (\delta^+ + \delta^-) + \beta(H^+ + H^-) + \gamma.
    \end{align*}
    Finally, since $\gamma > 0$ is arbitrary, it must be that
    \begin{align*}
        (\delta^+_{c} + \delta^-_{c}) + \beta(H^+_{c} + H^-_{c}) \le (\delta^+ + \delta^-) + \beta(H^+ + H^-).
    \end{align*}

    \paragraph{Case 2:} $\beta > \eps$. \\
    \noindent Note that $\beta=\eps$ is a valid value of $\beta$ for $c=1$. So, by the analysis in Case 1 above, we have that for the linear policy $\set{P}_1$ corresponding to $c=1$, 
    \begin{align*}
        (\delta^+_{1} + \delta^-_{1}) + \eps(H^+_{1} + H^-_{1}) \le (\delta^+ + \delta^-) + \eps(H^+ + H^-).
    \end{align*}
    But observe that $\set{P}_1$ is simply the policy that tosses once, and declares $\half+\eps$ if the coin lands heads, and declares $\half-\eps$ if it lands tails. In particular, $\delta^+_1 = \delta^-_1=\half-\eps$, and $H^+_1=H^-_1=1$, and so
    \begin{equation}
        \label{eqn:compare-to-c1}
        1-2\eps+2\eps=1 \le (\delta^+ + \delta^-) + \eps(H^+ + H^-).
    \end{equation}
    Now consider the policy which declares $\half+\eps$ always, without even tossing the coin at all. This would technically correspond to the linear policy $\set{P}_0$ which has stopping points at $h-t=0$---in particular, it stops at $(h,t)=(0,0)$ itself. For this policy, we have that $\delta^+_0=0, \delta^-_0=1$, $H^+_0=H^-_0=0$, and so, for any $\beta$,
    \begin{align*}
        (\delta^+_0 + \delta^-_0) + \beta(H^+_0 + H^-_0) &= 1 \\
        &\le (\delta^+ + \delta^-) + \eps(H^+ + H^-) \qquad (\text{from \eqref{eqn:compare-to-c1}})\\
        &\le (\delta^+ + \delta^-) + \beta(H^+ + H^-). \qquad (\text{since $\beta>\eps$})
    \end{align*}
    Thus, we have shown that for any $\beta > 0$, we can find the linear policy $\mcP_c$ corresponding to the segment $[l_c, u_c]$ in which $\beta$ lies, where $l_c, u_c$ are the lower and upper bounds in \eqref{eqn:beta-sandwich} respectively, and this linear policy is optimal with respect to $\beta$.
\end{proof}

\begin{remark}
    \label{remark:optimal-policy-characterization}
    The characterization of the precise policy $\mcP_c$ that satisfies the optimality criterion \eqref{eqn:bayes-risk-optimality-criterion} in \Cref{thm:linear-policy-beta} for a given value of $\beta$ is as follows: if $\beta \le \eps$, then $\mcP_c$ is the linear policy, whose intercept $c \ge 1$ satisfies \eqref{eqn:beta-sandwich}.
    On the other hand, if $\beta > \eps$, then $\mcP_0$, i.e., the policy that immediately declares $\half+\eps$ (or $\half-\eps$) right at the outset, without tossing at all, satisfies the optimality criterion. %
\end{remark}

\section{Generalizations and Future Work}
\label{sec:generalizations}

In this work, we presented a geometric, local moves framework to show the optimality of the SPRT. Specifically, for the task of distinguishing between symmetric Bernoulli hypotheses, our proof characterized the linear policy $\mcP_c$ that optimizes the Bayes risk $R(\beta)$ for every parameter $\beta$. %

As noted after the statement of \Cref{thm:linear-policy-beta}, our analysis considered a special class of the general Bayes risk $R(w_0, w_1, C, \pi)$ given in \eqref{eqn:bayes-risk}, since it assumes $w_0=w_1$ and a uniform prior $\pi$. It is known that a linear policy is optimal even in the more general case \cite{girshick1946contributions}. It would be interesting to see if our algorithmic proof can be extended to this more general setting. We can first consider a uniform prior, but different weights $w_0$ and $w_1$, before considering arbitrary priors $\pi$. In these cases, however, we lose all the convenient symmetries in the Bayes risk that we relied upon at various places in our proof.  

A different direction would be to consider the problem of distinguishing between arbitrary, non-symmetric Bernoulli hypotheses $H_0:p=p_1$ versus $H_1:p=p_2$ where $p_1 < p_2$. While the framework of local improvements that we rely on may still work, the bounds on the parameters would become significantly more complicated, requiring messier algebraic derivations. In particular, we can no longer cleanly apply transformations relating $\Pr_{H_0}[\text{coin arrives at } x]$ to $\Pr_{H_1}[\text{coin arrives at } x]$ such as the one we used in \eqref{eqn:calc-relating-plus-minus}, when $H_0$ and $H_1$ were symmetric Bernoulli hypotheses. Lastly, it would also be interesting to see if this local moves framework gives something for composite hypothesis testing tasks, like testing $H_0:p \le 1/2-\eps$ versus $H_1:p\ge 1/2+\eps$.

\section*{Acknowledgements}
This work was supported by Gregory Valiant's and Moses Charikar's Simons Investigator Awards.

\bibliographystyle{alpha}
\bibliography{references}

\begin{thebibliography}{WW48}

\bibitem[Gho91]{ghosh1991brief}
Bhaskar~Kumar Ghosh.
\newblock A brief history of sequential analysis.
\newblock {\em Handbook of sequential analysis}, 1, 1991.

\bibitem[Gir46]{girshick1946contributions}
Meyer~Abraham Girshick.
\newblock Contributions to the theory of sequential analysis, ii, iii.
\newblock {\em The Annals of Mathematical Statistics}, pages 282--298, 1946.

\bibitem[Leh59]{lehmann2005testing}
Erich~Leo Lehmann.
\newblock {\em Testing statistical hypotheses}.
\newblock John Wiley \& Sons, 1959.

\bibitem[Mat63]{matthes1963optimality}
Theodore~K Matthes.
\newblock On the optimality of sequential probability ratio tests.
\newblock {\em The Annals of Mathematical Statistics}, 34(1):18--21, 1963.

\bibitem[Shi11]{shiryaev2011optimal}
Albert~N Shiryaev.
\newblock Optimal stopping rules.
\newblock In {\em International Encyclopedia of Statistical Science}, pages 1032--1034. Springer, 2011.

\bibitem[Val23]{valiant23}
Gregory Valiant.
\newblock Lecture notes on the martingale stopping theorem and applications.
\newblock \url{https://web.stanford.edu/class/cs265/Lectures/Lecture17/l17.pdf}, December 2023.

\bibitem[Wal45]{wald1945sequential}
A.~Wald.
\newblock {Sequential Tests of Statistical Hypotheses}.
\newblock {\em The Annals of Mathematical Statistics}, 16(2):117 -- 186, 1945.

\bibitem[WW48]{wald1948optimum}
Abraham Wald and Jacob Wolfowitz.
\newblock Optimum character of the sequential probability ratio test.
\newblock {\em The Annals of Mathematical Statistics}, pages 326--339, 1948.

\end{thebibliography}

\appendix
\section{Proof of \Cref{claim:derivative-calculation-erase}}
\label{sec:proof-derivative-calculation}

We restate \Cref{claim:derivative-calculation-erase} for convenience:
\claimderivative*
\begin{proof}
    Recall the definition of the function $g(c')$:
    \begin{align*}
        g(c') := \frac{2\eps(\alpha^c-\alpha^{c'})}{c'(\alpha^c+1)(\alpha^{c'}-1)-c(\alpha^c-1)(\alpha^{c'}+1)}.
    \end{align*}
    Here, $c > c'$.
    We can verify that
    \begin{align}
        \frac{dg(c')}{dc'} &= 2\eps(1-\alpha^c) \cdot -\frac{(\alpha^c+1)[\ln(\alpha)\cdot(\alpha^c-1)(c-c')\alpha^{c'}-(\alpha^{c'}-1)(\alpha^c-\alpha^{c'})]}{(\alpha^c-1)[c'(\alpha^c+1)(\alpha^{c'}-1)-c(\alpha^c-1)(\alpha^{c'}+1)]^2} \nonumber \\ 
        &= 2\eps \cdot \frac{(\alpha^c+1)[\ln(\alpha)\cdot(\alpha^c-1)(c-c')\alpha^{c'}-(\alpha^{c'}-1)(\alpha^c-\alpha^{c'})]}{[c'(\alpha^c+1)(\alpha^{c'}-1)-c(\alpha^c-1)(\alpha^{c'}+1)]^2}. %
        \label{eqn:outward-move-derivative-positive}
    \end{align}
    We will first argue that the denominator is positive. Since it is a square, it suffices to argue that the denominator is not $0$. Note first that if $c'=0$, then the denominator is positive. Now, assume that $c'>0$ but the denominator is still 0. Then, it must be so that
    $$\frac{\alpha^c+1}{\alpha^c-1} = \frac{c}{c'}\cdot\frac{\alpha^{c'}+1}{\alpha^{c'}-1},$$
    but observe that since $c > c'$, both $\frac{c}{c'} > 1$ and $\frac{\alpha^{c'}+1}{\alpha^{c'}-1} > \frac{\alpha^c+1}{\alpha^c-1}$; hence, this cannot be the case. 

    Now, we will reason about the numerator. In particular, we will show that for any $0 \le c' < c$, the numerator is strictly positive. Note that it suffices to show that the function $$\ln(\alpha)\cdot(\alpha^c-1)(c-c')\alpha^{c'}-(\alpha^{c'}-1)(\alpha^c-\alpha^{c'})$$ is positive. 

    The derivative of this function with respect to $c$ is 
    $$\ln(\alpha) \left(\alpha^c - \alpha^{c'} + \alpha^{c + c'} (c - c') \ln(\alpha)\right).$$
    Note that the derivative is positive, so this function is increasing in $c$. Thus, it suffices to show that the function is positive for $c = c' + 1$. Substituting, we get
    $$(\ln \alpha)(\alpha^c-1)\alpha^{c-1} - (\alpha^c-\alpha^{c-1})(\alpha^{c-1}-1).$$
    We only care about the sign of this expression, so we can divide by $a^{c-1}$, to obtain
    $$(\ln \alpha)(\alpha^c-1) - (\alpha-1)(\alpha^{c-1}-1).$$
    This function is smooth. In particular, this function can only change its sign at a root. We can observe that $\alpha=1$ is a root. We claim there are no other roots. 
    
    \noindent When $\alpha \neq 1$, we can solve for $c$ to get
    $$c = \frac{\ln\left(\frac{\alpha - \alpha^2 + \alpha\ln(\alpha)}{1 - \alpha + \alpha\ln(\alpha)}\right)}{\ln(\alpha)}.$$
    
    \noindent Note that a solution will exist for $c$ if and only if the expression inside the log is positive. In particular, we claim that 
    $$\frac{\alpha - \alpha^2 + \alpha\ln(\alpha)}{1 - \alpha + \alpha\ln(\alpha)} = \alpha\cdot\frac{1 - \alpha + \ln(\alpha)}{1 - \alpha + \alpha\ln(\alpha)}$$
    is negative.
    
    \noindent The numerator is negative and the denominator is positive, so this expression is always negative. For the numerator, note that $e^x > x + 1$ for all positive $x$, and substitute $x = \ln \alpha$. For the denominator, note than its derivative is $\ln \alpha$, so the denominator is increasing when $\alpha>1$, so it is greater than when evaluated at 1, which is $0$.
    
    \noindent Thus, there are no roots other than at $\alpha=1$. Consequently, when $\alpha>1$, the sign of the function is the same as when it is evaluated at $\alpha=e$, where it is positive. 

    Summarily, we have argued that both the numerator and denominator in \eqref{eqn:outward-move-derivative-positive} are positive. This completes the proof that $\frac{dg(c')}{dc'} > 0$.

\end{proof}
\end{document}